\numberwithin{equation}{section}
\theoremstyle{plain}
\newtheorem{Th}{Theorem}[section]
\newtheorem{Lemma}[Th]{Lemma}
\newtheorem{Cor}[Th]{Corollary}
\newtheorem{Prop}[Th]{Proposition}
\newtheorem{proposition}[Th]{Proposition}
 \theoremstyle{definition}
\newtheorem{Def}[Th]{Definition}
\newtheorem*{Def*}{Definition}
\newtheorem*{Rem*}{Remark}
\newtheorem{?}[Th]{Problem}
\renewcommand{\P}{\mathsf{P}}
\newcommand{\rmk}[1]{\textcolor{black}{#1}}
\newcommand{\R}{\mathbb{R}}
\newcommand{\N}{\mathbb{N}}
\newcommand{\la}{\langle}\newcommand{\ra}{\rangle}
\newcommand{\eps}{\epsilon}
\renewcommand{\H}{\mathcal{H}}
\newcommand{\cH}{\mathcal{H}}
\renewcommand{\S}{\mathbf{S}}
\newcommand{\tr}{\mathsf{tr}}
\newcommand{\nn}{\mathbf{n}}
\newcommand{\cL}{\mathcal{L}}
\newcommand{\F}{\mathcal{F}}
\newcommand{\C}{\mathcal{C}}
\newcommand{\itr}{\mathsf{int}\,}
\newcommand{\cl}{\mathsf{cl}\,}
\newcommand{\dom}{\mathsf{dom}\,}
\newcommand{\bd}{\mathsf{bd}\,}
\newcommand{\cspan}{\overline{\mathsf{span}}\,}
\renewcommand{\odot}{\vee}
\newcommand{\lspan}{\mathsf{span}\,}
\newcommand{\E}{\mathcal{E}}
\newcommand{\rank}{\mathsf{rank}}
\newcommand{\diag}{\mathsf{diag}}
\begin{document}

\author[Hong-Bin Chen]{Hong-Bin Chen}
\address[Hong-Bin Chen]{Courant Institute of Mathematical Sciences, New York University, New York, New York, USA}
\email{hbchen@cims.nyu.edu}

\author[Jiaming Xia]{Jiaming Xia}
\address[Jiaming Xia]{Department of Mathematics, University of Pennsylvania, Philadelphia, Pennsylvania, USA}
\email{xiajiam@sas.upenn.edu}

\keywords{Fenchel--Moreau identity, monotone conjugate, cone}
\subjclass[2010]{46N10, 52A07}

\title{Fenchel--Moreau identities on \rmk{convex} cones}

\begin{abstract}
    A pointed convex cone naturally induces a partial order, and further a notion of nondecreasingness for functions. We consider extended real-valued functions defined on the cone. Monotone conjugates for these functions can be defined in an analogous way to the standard convex conjugate. The only difference is that the supremum is taken over the cone instead of the entire space. We give sufficient conditions for the cone under which the corresponding Fenchel--Moreau biconjugation identity holds for proper, convex, lower semicontinuous, and nondecreasing functions defined on the cone. In addition, we show that these conditions are satisfied by a class of cones known as perfect cones.
\end{abstract}

\maketitle

\section{Introduction}

The classical Fenchel--Moreau identity can be stated as $f=f^{**}$ for convex $f:\H\to(-\infty, \infty]$ satisfying a few additional regularity conditions. Here $\H$ is a Hilbert space with inner product $\la\cdot,\cdot\ra$ and the convex conjugate is given by
\begin{align*}
    f^*(x) = \sup_{y\in\H}\{\la y, x\ra - f(y)\},\quad \forall x\in \H.
\end{align*}
Note that the supremum is taken over the entire space $\H$. 

\smallskip

On the other hand, it is well-known (c.f. \cite[Theorem 12.4]{rockafellar1970convex}) that if $f:[0,\infty)^d\to(-\infty,\infty]$ is convex with extra usual assumptions and, in addition, is nondecreasing in the sense that
\begin{align*}
    f(x)\geq f(y),\quad \text{ if } x -y \in [0,\infty)^d,
\end{align*}
then we also have $f=f^{**}$. Here $*$ stands for the monotone conjugate defined by
\begin{align*}
    f^*(x)= \sup_{y\in[0,\infty)^d}\{\la y,x\ra -f(y)\},\quad \forall x\in [0,\infty)^d.
\end{align*}
The inner product appearing above is the standard one in $\R^d$. The nonnegative orthant $[0,\infty)^d$ is a cone in $\R^d$ and the nondecreasingness can be formulated with respect to the partial order induced by this cone. Compared with the convex conjugate, the supremum above is taken over the cone.

\smallskip 

Recently, in \cite{chen2020hamilton}, to study a certain Hamilton--Jacobi equation with spatial variables in the set of $n\times n$ (symmetric) positive semidefinite (p.s.d.) matrices denoted by $\S^n_+$, a version of the Fenchel--Moreau identity on $\S^n_+$ is needed to verify that the unique solution admits a variational formula. The derivation of such formulae for Hamilton--Jacobi equations on entire Euclidean spaces are known and can be seen, for instance, in \cite{bardi1984hopf,lions1986hopf}. On $\S^n_+$, \cite[Proposition~B.1]{chen2020hamilton} proves that $f=f^{**}$ holds if $f:\S^n_+\to(-\infty,\infty]$ is convex with some usual regularity assumptions and is nondecreasing in the sense that
\begin{align*}
    f(x)\geq f(y),\quad \text{ if }x-y\in\S^n_+.
\end{align*}
Accordingly, here $*$ stands for the monotone conjugate with respect to $\S^n_+$ given by
\begin{align*}
    f^*(x)=\sup_{y\in\S^n_+}\{\la y,x\ra - f(y)\},\quad\forall x\in\S^n_+.
\end{align*}
The inner product is the Frobenius inner product for matrices. Again, in this case, $\S^n_+$ can be viewed as a cone in $\S^n$, the space of $n\times n $ real symmetric matrices.

\smallskip

In view of these two examples, it is natural to pursue a generalization to an arbitrary (convex) cone $\C$ in a Hilbert space $\H$. More precisely, we want to show $f=f^{**}$ for proper, lower semicontinuous and convex $f:\C\to(-\infty, \infty]$ which is also nondecreasing in the sense that
\begin{align*}
    f(x)\geq f(y),\quad \text{ if }x-y\in\C,
\end{align*}
where
\begin{align}
    f^*(y) & = \sup_{z\in\C}\{\la z,y\ra - f(z)\},\quad \forall y\in\C^\odot, \label{eq:def_f*}
    \\
    f^{**}(x)&=\sup_{y\in\C^\odot}\{\la y,x\ra-f^*(y)\},\quad \forall x\in\C,\label{eq:f**}
\end{align}
where $\C^\odot$ is the dual cone of $\C$.

In Theorem~\ref{th:gen}, we give sufficient conditions on $\C$ for $f=f^{**}$ to hold for all $f$ satisfying the aforementioned properties. In particular, these conditions hold for a class of cones called \textit{perfect cones} first introduced in \cite{barker1978perfect} in the setting of Euclidean spaces. In short, a perfect cone is a self-dual cone satisfying that every face $\F$ of $\C$ is self-dual in the linear space spanned by $\F$.

\smallskip

\smallskip

The nonnegative orthant $[0,\infty)^d$ and the set of p.s.d.\ matrices $\S^n_+$ are both perfect cones. The former is easy to see using Definition~\ref{def:perfect} and the latter will be proved in \rmk{Proposition}~\ref{lemma:psd_perfect}. An example of an infinite-dimensional perfect cone is given in Lemma~\ref{lemma:circ_perfect}. Classical references for properties of cones and self-dual cones in Euclidean spaces or Hilbert spaces include \cite{barker1978faces,barker1981theory,barker1976self,bellissard1978homogeneous,penney1976self}. The generality pursued in this work is also motivated by the study of Hamilton--Jacobi equations arising in mean-field disordered systems \cite{mourrat2018hamilton, mourrat2019hamilton, mourrat2019parisi,mourrat2020nonconvex,mourrat2020free,chen2020hamilton_nonsym,chen2020hamilton}, where the solution is defined on a set that can be identified with a cone in possibly infinite dimensions, and expected to be nondecreasing with respect to the cone.

\smallskip

Let us briefly comment on the connection to the theory of abstract convexity and related works. Let $\mathscr A$ be the collection of affine functions with slopes in $\C^\vee$. In view of~\eqref{eq:def_f*} and~\eqref{eq:f**}, we can declare a function $f$ on $\C$ to be $\mathscr A$-convex if $f$ is equal to the upper envelope of all functions in $\mathscr A$ lying below $f$ (see \eqref{eq:sup_set} and the right-hand side of \eqref{eq:upper_envelope}). Then, by the Fenchel--Moreau theorem for abstract convexity (c.f.\ \cite[Theorem~7.1]{hadjisavvas2006handbook}), the desired Fenchel--Moreau identity here is equivalent to the statement that the $\mathscr A$-convexity coincides with the usual notion of convexity for nondecreasing functions defined on $\C$. We refer to \cite{moreau1970inf, singer1997abstract, hadjisavvas2006handbook} for more details on abstract convexity. Studies of increasing functions on cones from the perspective of abstract convexity include \cite{dutta2004monotonic, dutta2004monotonic2, dutta2008monotonic}.

\smallskip

The rest of the paper is organized as follows. We introduce definitions and state the main results in Section~\ref{section:def_main}. These results will be proved in Section~\ref{section:on_self_dual} and Section~\ref{section:on_perfect}. Lastly, examples of perfect cones in finite dimensions and infinite dimensions are given in Section~\ref{section:examples}.

\subsection*{Acknowledgements}
We thank Jean-Christophe Mourrat and Tim Hoheisel for helpful discussions. We are grateful to the anonymous referees for comments that helped to improve the paper significantly.

\section{Definitions and main results}\label{section:def_main}

Let $\H$ be a real Hilbert space equipped with inner product $\la \cdot,\cdot \ra$ and the associated norm $|\cdot|$. We refer to an element in $\H$ sometimes as a vector, though $\H$ can be possibly infinite-dimensional. We denote the interior, the closure and the boundary relative to $\H$ by $\mathsf{int}$, $\mathsf{cl}$, and $\mathsf{bd}$, respectively.

\subsection{Definitions related to cones}

Let $\C$ be a cone in $\H$. In this work, for simplicity, we require all cones to be convex and contain the origin. Hence, $\C$ is a cone if and only if it satisfies
\begin{align*}
    \alpha x +\beta y \in \C, \quad \forall x, y\in \C,\ \forall \alpha,\beta \geq 0.
\end{align*}

\smallskip

Naturally, $\C$ induces a preorder $\preceq$ on $\H$ given by
\begin{align*}
    x\preceq y\quad \text{if and only if}\quad y-x\in\C.
\end{align*}
We also write $x\succeq y$ if $y\preceq x$. When $\C$ is \textit{pointed}, namely $\C\cap(-\C) =\{0\}$, this preorder becomes a partial order. 
We denote by $\lspan$ and $\cspan$ the operations of taking the linear span and the closed linear span, respectively.
The dual of $\C$ with respect to $\cspan \C$ is given by
\begin{align}\label{eq:C_dual}
    \C^\odot = \{x\in \cspan \C:\ \la x,y\ra\geq 0,\, \forall y\in\C\}.
\end{align}
The cone $\C$ is said to be self-dual (with respect to $\cspan\C$) provided $\C =\C^\odot$. It is clear that a self-dual cone is closed and pointed. 

\smallskip

A subset $\F$ of a closed and pointed cone $\C$ is a \textit{face} of $\C$ if $\F$ is a cone and satisfies that
\begin{align}\label{eq:face}
    \text{if }\  0\preceq x \preceq y\  \text{ and }\ y\in \F, \quad\text{ then }x\in\F. 
\end{align}
Denote by $\F^{\vee}$ the dual cone of $\F$ in the space $\cspan \F$. The following definition is a generalization of \cite[Definition~4]{barker1978perfect} from Euclidean spaces to Hilbert spaces.
\begin{Def}\label{def:perfect}
A cone $\C$ is said to be \textit{perfect} if every face $\F$ of $\C$ satisfies
\begin{enumerate}
    \item $\F^{\vee} = \F$;\label{item:1}
    \item $\F$ has nonempty interior with respect to $\cspan\F$.\label{item:2}
\end{enumerate}

\end{Def}

\rmk{Since $\C$ is a face of itself, a perfect cone $\C$ is self-dual.} In finite-dimensions, a self-dual cone always has nonempty interior in its own span (c.f. \cite[Exercise 6.15]{bauschke2011convex}). Hence, if $\H$ is finite-dimensional, then \eqref{item:2} automatically follows from \eqref{item:1}. Compared with \cite[Definition 3]{barker1978perfect} where only \eqref{item:1} is imposed, condition \eqref{item:2} is added to ensure this non-degeneracy in infinite dimensions. In Section~\ref{section:examples}, we give two examples of perfect cones, a finite-dimensional one and an infinite-dimensional one.

\subsection{Definitions related to functions}

The domain of a function $f:\C\to(-\infty,\infty]  $ is defined as
\begin{align}\label{eq:domain_f}
    \dom f = \big\{x\in\C:\ f(x)<\infty\big\}.
\end{align}
A function $f:\C\to(-\infty,\infty]  $ is said to be $\C$-\textit{nondecreasing}  provided
\begin{align*}
    f(x)\geq f(y), \quad \forall x\succeq y\succeq 0.
\end{align*}
For any $f:\C\to(-\infty,\infty]  $, we define the \textit{monotone conjugate} of $f$ by \eqref{eq:def_f*} and the \textit{monotone biconjugate} of $f$ by \eqref{eq:f**}.
Lastly, $f$ is said to be \textit{proper} if $f$ is not identically equal to $\infty$. We denote by $\Gamma_\nearrow(\C)$ the collection of functions on $\C$ with values in $(-\infty,\infty]$ that are proper, convex, lower semicontinuous (l.s.c.), and $\C$-nondecreasing.

\smallskip

\rmk{Note that the ambient Hilbert space $\cH$ is not playing an important role. By restricting to a subspace, one can assume that $\cH=\cspan \C$ when needed.}

\subsection{Main results}

For any closed subspace $\H'\subset \H$, we denote by $\P_{\H'}$ the orthogonal projection onto $\H'$.
\begin{Th}\label{th:gen}
\rmk{Let $\C\subset \H$ be a closed and pointed cone. Assume that}
\begin{enumerate}[start=1,label={\rm{(H\arabic*)}}]
\item \label{H:face} every face $\F$ of $\C$ is closed and has nonempty interior with respect to $\cspan\F$;
        \item \label{H:dual_face} 
        for every face $\F$ of $\C$, the dual cone $\F^\vee$ of $\F$ in the space $\cspan\F$ is contained in $\P_{\cspan\F}\,(\C^\odot)$.
\end{enumerate}
Let $f:\C\to(-\infty, \infty]$ be proper. Then, $f=f^{**}$ if and only if $f\in \Gamma_\nearrow(\C)$. 
\end{Th}

If $f=f^{**}$, then it is easy to see $f\in \Gamma_\nearrow(\C)$ necessarily. The nontrivial part is the sufficient condition for $f=f^{**}$. As a special case, the following holds.

\begin{Cor}\label{cor:on_perfect}
Suppose that $\C$ is a perfect cone. Let $f:\C\to(-\infty, \infty]$ be proper. Then, $f=f^{**}$ if and only if $f\in \Gamma_\nearrow(\C)$. 
\end{Cor}

\rmk{
In finite dimensions, given that $\C$ is closed, every face $\F$ is automatically closed (see \cite[Corollary~18.1.1]{rockafellar1970convex}), and the second half of \ref{H:face} also holds. The proposition below shows that \ref{H:dual_face} is nearly sharp in finite dimensions.}
\begin{Prop}\label{prop:dual_face}
Assume that $\H$ is finite-dimensional. If $f=f^{**}$ for all $f\in\Gamma_\nearrow(\C)$, then every face $\F$ of $\C$ satisfies $\F^\vee\subset\cl(\P_{\lspan \F}(\C^\odot))$.
\end{Prop}

We believe that our results can be extended to more general scenarios. Here, we stick to the current setting for simplicity of presentation.

\section{Preliminaries}\label{section:on_self_dual}

In the first part of this section, we state some basic results that are needed throughout this work. In the second part, we prove Proposition~\ref{prop:dual_face}. In the last part, we prove the following result.

\begin{proposition}\label{prop:biconj_int}
Suppose that $\C$ is closed and pointed. Let $f:\C\to (-\infty,\infty] $ satisfy $\itr\dom f\neq \emptyset$. Then $f=f^{**}$ if and only if $f\in \Gamma_\nearrow(\C)$.
\end{proposition}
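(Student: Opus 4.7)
The necessity direction is immediate: if $f=f^{**}$, then $f$ is the pointwise supremum of the affine functions $z\mapsto\la z,y\ra - f^*(y)$ for $y\in\C$, each of which is convex, continuous, and $\C$-nondecreasing (since $y\in\C=\C^\odot$ forces $\la v,y\ra\geq 0$ for every $v\in\C$); properness is given. For sufficiency, the inequality $f^{**}\leq f$ is automatic, so my plan is to prove $f(x_0)\leq f^{**}(x_0)$ for every $x_0\in\C$ in three stages reflecting how $x_0$ sits relative to $\dom f$.

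First, for $x_0\in\itr\dom f$, I extend $f$ to $\H$ by $+\infty$ outside $\C$ and invoke the standard convex subdifferentiability at interior points of the effective domain to obtain a subgradient $y\in\H$ at $x_0$ satisfying $f(x)\geq f(x_0)+\la y,x-x_0\ra$ on $\C$. The key step is to promote $y$ into $\C$: since $\dom f\subseteq\C$ places $x_0\in\itr\C$, for any $v\in\C$ the point $x_0-tv$ lies in $\C$ for small $t>0$, and $\C$-nondecreasingness yields $f(x_0-tv)\leq f(x_0)$; combined with the subgradient inequality, this forces $\la y,v\ra\geq 0$, whence $y\in\C^\odot=\C$. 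A rearrangement produces $f^*(y)\leq\la x_0,y\ra - f(x_0)$, so $f^{**}(x_0)\geq f(x_0)$. Next, for $x_0\in\cl\dom f$, I fix some $x^*\in\itr\dom f$ and set $x_t=(1-t)x_0+tx^*$; the line segment principle places $x_t\in\itr\dom f$ for $t\in(0,1]$, so the first stage gives $f^{**}(x_t)=f(x_t)$, and combining the convexity bound $f^{**}(x_t)\leq (1-t)f^{**}(x_0)+tf(x^*)$ with lower semicontinuity of $f$ at $x_0$ and letting $t\to 0^+$ yields $f(x_0)\leq f^{**}(x_0)$, with the case $f^{**}(x_0)=+\infty$ being trivial.

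The third and hardest stage addresses $x_0\in\C\setminus\cl\dom f$, where $f(x_0)=+\infty$ and the target $f^{**}(x_0)=+\infty$ will be realized by producing $y\in\C$ with $\la x_0,y\ra - f^*(y)$ arbitrarily large. The $\C$-nondecreasingness makes $\dom f$ itself $\C$-downward closed in $\C$, which rules out $x_0\in\dom f-\C$: any $v\in\C$ with $x_0+v\in\dom f$ would force $f(x_0+v)\geq f(x_0)=+\infty$, contradicting $x_0+v\in\dom f$. Once $x_0\notin\cl(\dom f-\C)$ is verified, Hahn--Banach separation yields $(\eta,c)$ with $\la x,\eta\ra\geq c>\la x_0,\eta\ra$ on $\cl(\dom f-\C)$; since $-\C$ lies in the recession cone of $\dom f-\C$, the separating normal must satisfy $\la v,\eta\ra\geq 0$ for all $v\in\C$, giving $y:=-\eta\in\C$. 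Using $0\in\dom f$ (since $\C$-nondecreasingness and properness give $f(0)=\inf_\C f<+\infty$) and the bound $f^*(\lambda y)\leq -\lambda c-f(0)$, sending $\lambda\to\infty$ in
\[\la x_0,\lambda y\ra - f^*(\lambda y)\geq \lambda\bigl(c-\la x_0,\eta\ra\bigr)+f(0)\]
drives the right-hand side to $+\infty$, completing Stage 3.

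The main obstacle is the closure exclusion $x_0\notin\cl(\dom f-\C)$ required in Stage 3. A bounded approximating sequence $z_n-v_n\to x_0$ with $z_n\in\dom f$ and $v_n\to v\in\C$ yields $x_0+v\in\cl\dom f$ and $x_0\preceq x_0+v$, so deriving a contradiction with $x_0\notin\cl\dom f$ reduces to showing that $\cl\dom f$ inherits $\C$-downward closure from $\dom f$; I plan to verify this by a projection/perturbation argument exploiting $\itr\C\neq\emptyset$ (guaranteed by $\itr\dom f\subseteq\itr\C$ under the interior hypothesis). The unbounded case, where $|v_n|\to\infty$ with $v_n/|v_n|\to\hat v\in\C$, forces $\hat v$ into the recession cone of $\cl\dom f$; disposing of this scenario will require a careful recession analysis along directions in $\C$, and this is the technical crux where self-duality and the interior hypothesis interact most delicately.
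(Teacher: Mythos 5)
Your Stages 1 and 2 are correct. Stage 1 coincides with the paper's argument on $\itr\dom f$. Stage 2 is a genuinely shorter route than the paper's treatment of $\cl\dom f$ (which splits into two cases according to whether $\lambda x\in\cl\dom f$ for some $\lambda>1$ and works with subgradients at interior points and the affine minorants they generate): your combination of $f(x_t)=f^{**}(x_t)$ on the open segment, convexity of $f^{**}$, and lower semicontinuity of $f$ at $x_0$ gives $f(x_0)\le f^{**}(x_0)$ in a few lines and is complete as written; the only point worth making explicit is that $f^{**}(x_0)>-\infty$, which follows from $f^*(0)=-f(0)\in\R$.

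The genuine gap is in Stage 3, and it sits exactly where you flagged it: you never prove $x_0\notin\cl(\dom f-\C)$, and without that the separation step has nothing to separate from. The route you sketch --- splitting into bounded and unbounded approximating sequences $z_n-v_n\to x_0$ and doing a ``recession analysis'' in the unbounded case --- is not a proof, and it is particularly shaky in an infinite-dimensional $\H$, where $v_n/|v_n|$ admits only weakly convergent subsequences and a recession direction of $\cl\dom f$ does not obviously yield a contradiction. The missing statement is nevertheless true, with a short proof that makes the dichotomy unnecessary. Write $\Omega=\dom f$ and note that $(\Omega-\C)\cap\C=\Omega$ by $\C$-nondecreasingness. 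Fix $x'\in\itr\Omega\subset\itr\C$ and suppose $w_n=z_n-v_n\to x_0$ with $z_n\in\Omega$, $v_n\in\C$. For fixed $t\in(0,1)$ the point $(1-t)x_0+tx'$ lies in $\itr\C$ by Lemma~\ref{lemma:itrconvex} applied to $\C$, so $(1-t)w_n+tx'\in\C$ for all large $n$; moreover $(1-t)w_n+tx'=\bigl((1-t)z_n+tx'\bigr)-(1-t)v_n$ exhibits it as an element of $\Omega-\C$, hence of $(\Omega-\C)\cap\C=\Omega$. Letting $n\to\infty$ and then $t\to0$ gives $x_0\in\cl\Omega$, contradicting $x_0\in\C\setminus\cl\Omega$. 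With this lemma inserted, your Stage 3 goes through verbatim and constitutes a genuinely different (and arguably cleaner) argument than the paper's, which instead rescales $x_0$ to a boundary point $x'=\lambda'x_0$ of $\cl\Omega$ and constructs a suitable outer normal there via a weak-compactness argument (Lemma~\ref{lemma:outernormal}). As submitted, however, the proof is incomplete at its crux.
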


\subsection{Basic results of convex analysis}

For $a\in\H$ and $\nu\in\R$, we define the affine function $ L_{a,\nu}$ with slope $a$ and translation $\nu$ by
\begin{align}\label{eq:affine}
    L_{a,\nu}(x)=\la a,x\ra+\nu,\quad \forall x\in \H.
\end{align}
For a function $f:\E\to(-\infty,\infty]$ defined on a subset $\E\subset \H$, we can extend it in the standard way to $f:\H\to(-\infty,\infty]$ by setting $f(x)=\infty$ for $x\not\in\E$. For $f:\H\to(-\infty,\infty]$, we define its domain by
\begin{align*}
    \dom f = \big\{x\in\H:\ f(x)<\infty\big\}.
\end{align*}
Note that by the standard extension, the above definition is equivalent to \eqref{eq:domain_f} where only functions defined on $\C$ are considered. Henceforth, we shall not distinguish functions defined on $\C$ from their standard extensions to $\H$. Denote by $\Gamma_0(\E)$ the collection of proper, convex and l.s.c.\ functions from $\E\subset \H$ to $(-\infty,\infty]$. In particular, when $\C$ is closed, the collection $\Gamma_\nearrow(\C)\subset \Gamma_0(\C)$  can be viewed as a subcollection of $\Gamma_0(\H)$.

\smallskip

For $f:\H\to(-\infty,\infty]$ and each $x\in\H$, we define the subdifferential of $f$ at $x$ by
\begin{equation}\label{eq:def_subdiff}
    \partial f(x)=\big\{u\in\H:f(y)\geq f(x)+\la y-x,u\ra,\,\forall y\in\H\big\}.
\end{equation}
The effective domain of $\partial f$ is defined to be
\begin{align*}
    \dom \partial f = \big\{x\in\H:\ \partial f(x)\neq\emptyset\big\}.
\end{align*}

\smallskip

We now list some lemmas needed in our proofs. 
\begin{Lemma}\label{lemma:itrconvex}
For a convex set $A\subset \H$, if $y\in\mathsf{cl}\,A$ and $y'\in\mathsf{int}\,A$, then $\lambda y+(1-\lambda)y'\in\mathsf{int}\,A$ for all $\lambda \in[0,1)$.
\end{Lemma}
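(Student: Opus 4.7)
\bigskip

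\noindent\textbf{Proof proposal.} This is the classical line-segment (or ``accessibility'') principle for convex sets, and the plan is to prove it directly by exhibiting an open ball around $z_\lambda := \lambda y + (1-\lambda)y'$ contained in $A$. Since $y' \in \itr A$, there is some $r>0$ such that the open ball $B(y',r) \subset A$. The target is to show that for a suitable $s>0$, every point $w \in B(z_\lambda, s)$ can be written as a convex combination $w = \lambda \tilde y + (1-\lambda)\tilde y'$ with $\tilde y \in A$ close to $y$ and $\tilde y' \in B(y',r)$, at which point convexity of $A$ finishes the job.

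The concrete step is to set $s := (1-\lambda)r/2$ (any strictly smaller choice works) and, for a given $w \in B(z_\lambda, s)$, to pick $\tilde y \in A$ arbitrarily close to $y$ using $y \in \cl A$; the required closeness, say $|\tilde y - y| < \delta$, will be chosen so that $\lambda \delta < (1-\lambda)r/2$ (this is possible for $\lambda \in [0,1)$, including the trivial case $\lambda = 0$). Define $\tilde y' := \bigl(w - \lambda \tilde y\bigr)/(1-\lambda)$, so that automatically $w = \lambda \tilde y + (1-\lambda)\tilde y'$. A short triangle-inequality estimate
\[
|\tilde y' - y'| \;=\; \frac{1}{1-\lambda}\bigl|\,(w - z_\lambda) + \lambda(y - \tilde y)\,\bigr| \;\leq\; \frac{1}{1-\lambda}\bigl(s + \lambda \delta\bigr) \;<\; r
\]
then places $\tilde y'$ inside $B(y',r) \subset A$. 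By convexity, $w \in A$, so $B(z_\lambda, s) \subset A$ and therefore $z_\lambda \in \itr A$.

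There is no real obstacle here: the bookkeeping with $r$, $s$, and $\delta$ is the only content, and one must be slightly careful that the argument still works at the endpoint $\lambda = 0$ (where $z_\lambda = y'$ and the claim is trivial) and degenerates correctly as $\lambda \to 1$ (where $s \to 0$, consistent with the fact that the conclusion may fail at $\lambda = 1$ when $y \notin \itr A$). Since $\H$ is a Hilbert space we never need to invoke local compactness or finite-dimensional tools; the proof uses only the norm topology and the defining convex combination, so it goes through verbatim in infinite dimensions.
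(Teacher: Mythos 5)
Your proof is correct: the decomposition $w=\lambda\tilde y+(1-\lambda)\tilde y'$ with $\tilde y\in A$ near $y$ and $\tilde y'\in B(y',r)$ is exactly the standard argument, and the bookkeeping with $s=(1-\lambda)r/2$ and $\lambda\delta<(1-\lambda)r/2$ checks out, including the trivial case $\lambda=0$. The paper does not actually prove this lemma, instead citing \cite[Proposition~3.35]{bauschke2011convex}; your self-contained argument is the same one underlying that reference, so there is nothing to reconcile.
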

\begin{Lemma}\label{lemma:itrsubdif}
For $f\in\Gamma_0(\H)$, it holds that $\itr\dom f\subset \mathsf{dom}\, \partial f\subset \dom f$.
\end{Lemma}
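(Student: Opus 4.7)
The plan is to treat the two inclusions separately, both by direct arguments from the definitions.

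For the second inclusion $\dom\partial f\subset \dom f$, I would argue tautologically from the subgradient inequality. Fix $x\in \dom\partial f$ and pick any $u\in\partial f(x)$. Since $f$ is proper, there exists $y_0\in \dom f$; inserting $y=y_0$ into \eqref{eq:def_subdiff} yields
\[ f(x) \le f(y_0) + \la x-y_0, u\ra < \infty, \]
so $x\in\dom f$.

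For the first inclusion $\itr\dom f\subset \dom\partial f$, I would construct a subgradient at any interior point via Hilbert space projection onto the epigraph. Fix $x_0\in\itr\dom f$ and consider $E=\{(x,\alpha)\in \H\times\R:f(x)\le \alpha\}$, which is convex by convexity of $f$ and closed by lower semicontinuity. The point $p_0:=(x_0, f(x_0)-1)$ lies outside $E$. Projecting $p_0$ orthogonally onto $E$ in the product Hilbert space $\H\times\R$ produces a nonzero deficit vector $(u,\beta)\in\H\times\R$ satisfying, by the standard variational characterization of the projection,
\[ \la u,x\ra + \beta\alpha \ \ge\ \la u,x_0\ra + \beta(f(x_0)-1) + \bigl|(u,\beta)\bigr|^2, \qquad \forall (x,\alpha)\in E. \]

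What remains is a short sign analysis. Sending $\alpha\to+\infty$ along any fixed $x\in\dom f$ forces $\beta\ge 0$. The case $\beta=0$ is excluded using $x_0\in\itr\dom f$: one would then have $\la u, x-x_0\ra\ge |u|^2$ for every $x\in\dom f$, but choosing $x=x_0-\eps u\in\dom f$ for small $\eps>0$ gives $-\eps|u|^2\ge |u|^2$, forcing $u=0$ and contradicting $(u,\beta)\ne 0$. Hence $\beta>0$; substituting $\alpha=f(x)$ and dividing by $\beta$ yields $f(x)\ge f(x_0)+\la -u/\beta,\, x-x_0\ra$ on $\dom f$ (and trivially outside), so $-u/\beta\in\partial f(x_0)$. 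The only mild obstacle is obtaining the projection inequality and ruling out the horizontal separating case $\beta=0$, both of which are handled cleanly by the Hilbert space structure together with the interior-point hypothesis on $x_0$.
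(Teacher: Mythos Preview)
Your second inclusion is fine. The first inclusion, however, has a genuine gap in the last algebraic step. After establishing $\beta>0$ and substituting $\alpha=f(x)$, dividing your projection inequality by $\beta$ actually gives
\[
f(x)\ \ge\ f(x_0)+\big\la -u/\beta,\, x-x_0\big\ra \;+\;\Big(\tfrac{|(u,\beta)|^2}{\beta}-1\Big),
\]
and the extra constant is \emph{nonpositive}, not nonnegative: inserting the admissible point $(x,\alpha)=(x_0,f(x_0))\in E$ into your own inequality yields $\beta\ge |(u,\beta)|^2$. So the affine minorant you produce need not touch $f$ at $x_0$, and $-u/\beta$ need not lie in $\partial f(x_0)$. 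A concrete failure: take $\H=\R$, $f(x)=e^x$, $x_0=0$; then $p_0=(0,0)$, the projection onto the epigraph lands near $q\approx(-0.43,\,0.65)$, so $-u/\beta\approx 0.65$, whereas $\partial f(0)=\{1\}$. What the projection \emph{does} give is a subgradient at the foot $\bar x$ of $q$: rewriting $\la (u,\beta),\,(x,f(x))-q\ra\ge0$ yields $f(x)\ge f(\bar x)+\la -u/\beta,\,x-\bar x\ra$, hence $-u/\beta\in\partial f(\bar x)$; but $\bar x\ne x_0$ in general.

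The paper itself does not prove this lemma; it simply cites \cite[Proposition~16.21]{bauschke2011convex}. The standard argument runs differently: one first shows that $f$ is continuous on $\itr\dom f$ (in a general Hilbert space this step uses the Baire category theorem), so that $E$ has nonempty interior; then one applies Hahn--Banach to separate the boundary point $(x_0,f(x_0))$ from $\itr E$. This yields a supporting hyperplane to $E$ \emph{at} $(x_0,f(x_0))$, and your sign analysis (ruling out a vertical normal via $x_0\in\itr\dom f$) then produces an honest element of $\partial f(x_0)$. The essential difference is that separation pins the hyperplane at $(x_0,f(x_0))$, while projection from $p_0$ pins it at $q$.
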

\begin{Lemma}\label{lemma:lineconv}
Let $f\in\Gamma_0(\H)$, $x\in\H$ and $y\in\mathsf{dom}\,f$. For every $\alpha\in(0,1)$, set $x_\alpha=(1-\alpha)x+\alpha y$. Then $\lim_{\alpha\to 0} f(x_\alpha)=f(x)$.
\end{Lemma}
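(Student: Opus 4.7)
The plan is to sandwich $\lim_{\alpha\to 0}f(x_\alpha)$ between matching upper and lower bounds: the upper bound will come from convexity of $f$ along the segment from $x$ to $y$, and the lower bound will come from lower semicontinuity of $f$ at $x$. I would split into two cases according to whether $x\in\dom f$ or not, since the convexity inequality degenerates into the uninformative $+\infty\leq+\infty$ in the second case and the conclusion must then come from lower semicontinuity alone.

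First, suppose $x\in\dom f$. Since $y\in\dom f$ as well, convexity of $f$ yields
\begin{align*}
    f(x_\alpha)=f\bigl((1-\alpha)x+\alpha y\bigr)\leq (1-\alpha)f(x)+\alpha f(y),\quad\forall \alpha\in(0,1),
\end{align*}
with finite right-hand side. Letting $\alpha\to 0$ gives $\limsup_{\alpha\to 0}f(x_\alpha)\leq f(x)$. Since $x_\alpha\to x$ in $\H$ and $f$ is l.s.c., we also have $\liminf_{\alpha\to 0}f(x_\alpha)\geq f(x)$. Combining the two bounds proves the claim in this case.

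In the remaining case $x\not\in\dom f$, so that $f(x)=+\infty$, lower semicontinuity of $f$ at $x$ alone suffices: from $x_\alpha\to x$ we obtain $\liminf_{\alpha\to 0}f(x_\alpha)\geq f(x)=+\infty$, which already forces $\lim_{\alpha\to 0}f(x_\alpha)=+\infty=f(x)$.

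The main obstacle here is essentially nonexistent: the statement is a direct consequence of the two structural properties built into the class $\Gamma_0(\H)$, and neither the Hilbert-space inner product nor any cone-related ingredient plays a role. The only point requiring attention is the isolation of the case $x\not\in\dom f$, where the convexity bound gives no information.
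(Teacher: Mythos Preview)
Your proof is correct. The paper does not give its own argument for this lemma; it simply cites \cite[Proposition~9.14]{bauschke2011convex} and moves on. Your direct proof---upper bound from convexity along the segment, lower bound from lower semicontinuity, with the case $x\notin\dom f$ handled by l.s.c.\ alone---is exactly the standard elementary argument behind that cited result, and is self-contained. Nothing is missing.
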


\begin{Lemma}\label{lemma:supsubdif}
Suppose that $\C$ is closed. Let $f\in\Gamma_0(\C)$, $x\in\C$ and $u\in\C^\odot$. If $u\in\partial f(x)$, then $f^*(u)=\la x,u\ra-f(x)$.
\end{Lemma}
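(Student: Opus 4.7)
The plan is to read off both inequalities directly from the definition of the subdifferential and the fact that $f$ has been extended to all of $\H$ by setting it to $+\infty$ outside $\C$.

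First, I would unfold the subdifferential condition at $x$. By \eqref{eq:def_subdiff}, $u \in \partial f(x)$ means
\begin{align*}
    f(y) \geq f(x) + \langle y - x, u\rangle, \qquad \forall y \in \H.
\end{align*}
Since $f(y) = +\infty$ for $y \notin \C$ under the standard extension, the inequality carries real content only for $y \in \C$, where it rearranges to
\begin{align*}
    \langle y, u\rangle - f(y) \leq \langle x, u\rangle - f(x), \qquad \forall y \in \C.
\end{align*}
Taking the supremum over $y \in \C$ in the left-hand side and using the definition \eqref{eq:def_f*} of $f^*(u)$ (which is legitimate since the hypothesis $u \in \C$ ensures $f^*(u)$ is indeed being evaluated at a point of $\C$) yields the upper bound $f^*(u) \leq \langle x, u\rangle - f(x)$.

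For the reverse inequality I would simply test the supremum in \eqref{eq:def_f*} against the admissible point $y = x \in \C$, obtaining $f^*(u) \geq \langle x, u\rangle - f(x)$. Combining both bounds gives the claimed identity.

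There is no real obstacle here — the lemma is essentially the elementary Young–Fenchel equality in the setting where the sup defining $f^*$ is restricted to $\C$. The only subtlety worth a line of commentary is that the assumption $u \in \C$ is used only to make $f^*(u)$ meaningful under \eqref{eq:def_f*}, whereas the subdifferential inequality itself does not require any sign condition on $u$; closedness of $\C$ plays no direct role in the argument beyond being part of the ambient hypotheses under which $\Gamma_0(\C) \subset \Gamma_0(\H)$ via the standard $+\infty$-extension.
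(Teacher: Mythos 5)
Your proof is correct and follows essentially the same route as the paper's: the paper outsources the Fenchel--Young equality $\sup_{z\in\H}\{\la z,u\ra-f(z)\}=\la x,u\ra-f(x)$ to a citation and then notes that the supremum over $\H$ coincides with the supremum over $\C$ since $f=+\infty$ off $\C$, whereas you verify both inequalities directly from the subgradient inequality and the test point $y=x$. The only line worth adding is the observation that $f(x)<\infty$ (immediate from properness together with the subgradient inequality, or from Lemma~\ref{lemma:itrsubdif}), so that $\la x,u\ra-f(x)$ is a genuine real number and your rearrangement step is legitimate in extended-real arithmetic.
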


\begin{Lemma}\label{lemma:hyperplane}
For $f\in\Gamma_0(\C)$ and $x\in\C$, we have
\begin{align}\label{eq:upper_envelope}
    f^{**}(x)=\sup L_{a,\nu}(x)
\end{align}
where the supremum is taken over
\begin{align}\label{eq:sup_set}
    \{(a,\nu)\in\C^\odot\times\R: L_{a,\nu}\leq f\text{ on }\C\}.
\end{align}

\end{Lemma}

Lemmas \ref{lemma:itrconvex}, \ref{lemma:itrsubdif}, and  \ref{lemma:lineconv} can be found in \cite{bauschke2011convex} as Propositions~3.35, 16.21, and 9.14, respectively. 
For completeness, let us quickly prove Lemma~\ref{lemma:supsubdif} and Lemma~\ref{lemma:hyperplane}.

\begin{proof}[Proof of Lemma~\ref{lemma:supsubdif}]
By the standard extension, we have $f\in\Gamma_0(\H)$. Invoking \cite[Theorem 16.23]{bauschke2011convex}, it is classically known that
\begin{align*}
    \sup_{z\in\H}\big\{\la z, u \ra- f(z)\big\} = \la x, u \ra -f(x).
\end{align*}
By assumption, we know $x\in\dom\partial f$. Hence, Lemma~\ref{lemma:itrsubdif} implies $x\in \dom f$ and thus the right hand side of the above equation is finite. Then, the supremum on the left must also be finite. On the other hand, by the extension, we have $f(z)=\infty$ if $z\not\in \C$, which yields
\begin{align*}
    \sup_{z\in\H}\big\{\la z, u \ra- f(z)\big\} = \sup_{z\in\C}\big\{\la z, u \ra- f(z)\big\} = f^*(u).
\end{align*}
\end{proof}

\begin{proof}[Proof of Lemma~\ref{lemma:hyperplane}]
For each $y\in\C^\odot$,
\begin{align*}
    L_{y,\, -f^*(y)}(x) = \la y,x\ra - f^*(y),\quad \forall x \in \C.
\end{align*}
is an affine function with slope $y\in\C^\odot$. By \eqref{eq:def_f*}, we can see that $L_{y,\,-f^*(y)}\leq f$ on $\C$. In view of the definition of $f^{**}$ in \eqref{eq:f**}, we have $f^{**}(x)\leq \sup L_{a,\nu}(x)$ for all $x\in\C$ where the $\sup$ is taken over the collection in \eqref{eq:sup_set}.

\smallskip

For the other direction, for $L_{a, \nu}$ satisfying \rmk{$a \in \C^\odot$} and $L_{a, \nu}\leq f$, we have
\begin{align*}
    \la a, x\ra +\nu \leq f(x),\quad\forall x\in\C.
\end{align*}
Rearranging and taking supremum in $x\in\C$, we get $f^*(a)\leq -\nu$. This yields
\begin{align*}
    L_{a, \nu}(x)\leq \la a, x\ra -f^*(a)\leq f^{**}(x),
\end{align*}
which implies $ \sup L_{a,\nu}(x)\leq f^{**}(x)$.
\end{proof}

\subsection{Proof of Proposition~\ref{prop:dual_face}}
We first prove the following lemma.
\begin{Lemma}\label{lem:<w,x>=sup<v,x>}
For $w\in \F^\vee$, the function $f:\C\to \R\cup\{\infty\}$ given by \begin{align*}
    f(x) =
    \begin{cases}
    \la w, x\ra  &\quad \text{if}\ x \in \F,
    \\
    \infty  &\quad \text{if}\ x \not\in \F.
    \end{cases}
\end{align*}
belongs to $\Gamma_\nearrow(\C)$. Moreover, if $f=f^{**}$, then
\begin{align}\label{eq:f=sup<v,x>}
    \la w,x\ra = \sup\, \la v,x\ra,\quad\forall x \in \F
\end{align}
where the supremum is taken over
\begin{align}\label{eq:set_of_v}
    \{v\in \cl(\P_{\cspan \F}(\C^\odot)): w-v\in \F^\vee\}.
\end{align}
\end{Lemma}

\begin{proof}
It is clear that $f$ is convex, proper, and l.s.c. To show $f$ is $\C$-nondecreasing, let $0\preceq x\preceq y$. Note that this implies $0\preceq y-x\preceq y$. If $y\in\F$, by \eqref{eq:face} in the definition of faces, we have $x\in\F$ and $y-x\in\F$. This along with $w\in\F^\vee$ yields $f(x)\leq f(y)$. If $y\not\in\F$, then $f(x)\leq \infty = f(y)$. This verifies $f\in \Gamma_\nearrow(\C)$.

Now, we want to show \eqref{eq:f=sup<v,x>}. By Lemma~\ref{lemma:hyperplane},
\begin{align}\label{eq:f=<a,x>+nu}
    f(x) = \sup\{ \la a, x\ra +\nu\},\quad\forall x\in \F
\end{align}
where the supremum is taken over all $(a,\nu)\in\C^\odot\times\R$ satisfying
\begin{align*}
    \la w, y \ra \geq \la a,y\ra + \nu,\quad\forall y \in \F,
\end{align*}
which is equivalent to
\begin{align*}
    \la w-a, \lambda y \ra \geq \nu,\quad\forall \lambda \geq 0 ,\ y\in \F.
\end{align*}
Setting $\lambda = 0$ and $\lambda \to \infty$ yield, respectively,
\begin{gather*}
    \nu\leq 0, \qquad\text{and}\qquad \la w-a,y\ra \geq 0,\quad\forall y\in\F.
\end{gather*}
For every such $(a,\nu)$, setting $v=\P_{\cspan \F}(a)$ (which gives $\la v,y\ra=\la a,y\ra$ for all $y\in\F$), we thus obtain
\begin{align*}
    \la w, y\ra \geq \la v,y\ra \geq \la a,y\ra+\nu,\quad\forall y\in\F.
\end{align*}
In particular, this implies that $v$ belongs to the set in \eqref{eq:set_of_v}. Hence, in view of \eqref{eq:f=<a,x>+nu}, we conclude
\begin{align*}
    f(x)\leq \sup\,\la v,x\ra ,\quad\forall x\in\F
\end{align*}
where the supremum is over the set in~\eqref{eq:set_of_v}. On the other hand, for every $v$ in the set in \eqref{eq:set_of_v}, we have $f(x)=\la w,x\ra \geq \la v,x\ra$ for all $x\in\F$. This completes the proof of~\eqref{eq:f=sup<v,x>}.
\end{proof}

Now, we are ready to prove Proposition~\ref{prop:dual_face}.
Since $\H$ is finite-dimensional, we have $\lspan \F=\cspan\F$.
We argue by contradiction and assume that there is $w\in \F^\vee \setminus \cl(\P_{\lspan \F}(\C^\odot)) $. Then, by separation theorems, 
there are $\eps>0$ and $z\in \lspan \F$ such that
\begin{align}\label{eq:wz>uz}
    \la w, z\ra \geq \la u,z\ra +\eps ,\quad\forall u \in \P_{\lspan \F}(\C^\odot).
\end{align}
By \cite[Proposition~6.4~(i)]{bauschke2011convex} and the fact that $\F$ is a cone, we have $\lspan\F=\F-\F$. Hence, there are $x,y\in\F$ such that $z=x-y$. 
By Lemma~\ref{lem:<w,x>=sup<v,x>}, we can find $v$ from the set in \eqref{eq:set_of_v} such that
\begin{align*}
    \la w,x\ra <\la v,x\ra +\eps.
\end{align*}
On the other hand, since $\la w,y\ra \geq\la v,y\ra $ due to \eqref{eq:set_of_v}, we obtain from \eqref{eq:wz>uz} that
\begin{align*}
    \la w, x\ra \geq \la v,x\ra +\eps,
\end{align*}
contradicting the previous display.

\subsection{Proof of Proposition~\ref{prop:biconj_int}}\label{section:pf_prop}
Let $f:\C\to (-\infty,\infty] $ be proper and satisfy $\itr\dom f\neq \emptyset$.
It is clear from \eqref{eq:f**} that $f^{**}$ is convex, l.s.c., and $\C$-nondecreasing. Therefore, assuming $f=f^{**}$ and that $f$ is proper, we have $f\in \Gamma_\nearrow(\C)$. 

\smallskip

From now on, we assume $f\in \Gamma_\nearrow(\C)$ and prove the converse. For convenience, we write $\Omega = \dom f$. The plan is to prove the identity $f=f^{**}$ first on $\itr\Omega$, then on $\cl\Omega$, and finally on the entire $\C$.

\subsubsection{Analysis on $\itr\Omega$}

Let $x\in \itr \Omega$. By Lemma~\ref{lemma:itrsubdif}, we know $\partial f(x)$ is not empty. For each $v\in \C$, there is $\eps>0$ small so that $x -\eps v \in \Omega$. For each $u\in \partial f(x)$, by the definition of subdifferentials and nondecreasingness, we have
\begin{equation*}
    \la v,u\ra\geq \frac{1}{\eps}\big( f(x)-f(x-\eps v)\big)\geq 0,
\end{equation*}
which implies
\begin{align}\label{eq:partial_f(x)_in_C}
    \emptyset \neq \partial f(x)\subset \C^\odot,\quad\forall x\in\itr\Omega.
\end{align}
Invoking Lemma~\ref{lemma:supsubdif}, from \eqref{eq:partial_f(x)_in_C} we can deduce
\begin{align*}
    f(x)\leq\sup_{y\in\C^\odot}\{\la y,x\ra-f^*(y)\}=f^{**}(x).
\end{align*}
On the other hand, from \eqref{eq:f**},
it is easy to see that
\begin{align}\label{eq:f>f**}
    f(x)\geq f^{**}(x),\quad\forall x\in\C.
\end{align}
Hence, we obtain
\begin{align*}
    f(x)=f^{**}(x),\quad\forall x\in\itr\Omega.
\end{align*}

\bigskip

\subsubsection{Analysis on $\mathsf{cl}\,\Omega$}
Let $x\in\cl\Omega$ and choose $y\in\itr\Omega$. Then, $x_\alpha= (1-\alpha)x+\alpha y$ belongs to $\itr\Omega$ for every $\alpha \in (0,1]$ due to Lemma~\ref{lemma:itrconvex}. By the result on $\itr\Omega$, we have
\begin{align*}
    f(x_\alpha) = f^{**}(x_\alpha).
\end{align*}
Then, $x_\alpha$ belongs to $\dom f$ and $\dom f^{**}$. Applying Lemma~\ref{lemma:lineconv} and sending $\alpha \to 0$, we conclude that \begin{align}\label{eq:f=f**_on_cl}
    f(x)=f^{**}(x),\quad\forall x\in \cl\Omega.
\end{align}

\subsubsection{Analysis on $\C$}

Due to \eqref{eq:f=f**_on_cl}, we only need to consider vectors outside $\cl\Omega$. Let $x\in \C\setminus\mathsf{cl}\,\Omega$, and we have $f(x)=\infty$. Since $f$ is proper and $\C$-nondecreasing, we must have $0\in\Omega$.
By this, $x\notin\mathsf{cl}\,\Omega$ and the convexity of $\cl\Omega$, we must have
\begin{equation}\label{eq:lambda'<1}
    \lambda'=\sup\{\lambda\in[0,\infty):\lambda x\in\mathsf{cl}\,\Omega\}<1.
\end{equation}
We set
\begin{align}\label{eq:x'}
    x'=\lambda' x.
\end{align}
Then, we have that $x'\in\bd\Omega$ and $\lambda x'\notin \mathsf{cl}\,\Omega$ for all $\lambda>1$. 

\smallskip
We need to discuss two cases: $x'\in\Omega$ or not.

\smallskip
In the second case where $x'\notin \Omega$, we have $f(x')=\infty$. Due to $x'\in\cl\Omega$ and \eqref{eq:f=f**_on_cl}, we have $f^{**}(x')=\infty$. On the other hand, by \eqref{eq:f=f**_on_cl} and the fact that $0\in\Omega$, we have $f^{**}(0)=f(0)$ and thus $0\in \dom f^{**}$. The convexity of $f^{**}$ implies that
\begin{align*}
    \infty = f^{**}(x')\leq \lambda' f^{**}(x)+ (1-\lambda')f^{**}(0).
\end{align*}
Hence, we must have $f^{**}(x)=\infty$ and thus $f(x)=f^{**}(x)$ for such $x$.

\bigskip

We now consider the case where $x'\in\Omega$. For every $y\in\H$, the outer normal cone to $\Omega$ at $y$ is defined by
\begin{equation}\label{eq:ndef}
    \mathbf n (y)=\{z\in \H:\la z, y'-y\ra\leq 0, \,\forall y'\in\Omega\}.
\end{equation}
We need the following result.

\begin{Lemma}\label{lemma:outernormal}
Assume $\itr\Omega\neq \emptyset$. For every $y\in \Omega\setminus\mathsf{int}\,\Omega$ satisfying $\lambda y\notin \mathsf{cl}\,\Omega$ for all $\lambda>1$, there is $z\in \nn(y)\cap \C^\odot$ such that $\la z, y\ra >0$.

\end{Lemma}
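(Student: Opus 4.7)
The strategy is to enlarge $\Omega$ to the convex set $\tilde\Omega := \Omega - \C$ and take $z$ to be a supporting hyperplane normal of $\tilde\Omega$ at $y$. The key input is the downward-closure of $\Omega$ in $\C$, which follows from $\C$-nondecreasingness: if $y' \in \Omega$ and $v \in \C$ satisfy $v \preceq y'$, then $f(v) \leq f(y') < \infty$, so $v \in \Omega$. The virtue of working with $\tilde\Omega$ rather than $\Omega$ is that any supporting hyperplane normal of $\tilde\Omega$ at $y$ automatically lies in $\C^\odot = \C$ (since one can slide along $-\C$ while staying inside $\tilde\Omega$), and the inclusion $\Omega \subseteq \tilde\Omega$ ensures it simultaneously lies in $\nn(y)$.

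The main step is to check that $y \notin \itr \tilde\Omega$. Note first that $y \neq 0$, for otherwise the ray hypothesis would read $0 \notin \cl\Omega$, contradicting $0 \in \Omega$ (itself a consequence of $\C$-nondecreasingness applied to any point of $\Omega$). If $y$ were interior to $\tilde\Omega$, then $(1+\delta) y \in \tilde\Omega$ for some small $\delta > 0$, and writing $(1+\delta) y = y_\delta - c_\delta$ with $y_\delta \in \Omega$ and $c_\delta \in \C$ yields $y_\delta = (1+\delta) y + c_\delta \succeq (1+\delta) y \succeq 0$; downward-closure would then force $(1+\delta) y \in \Omega$, contradicting the ray hypothesis. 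Since $\itr \Omega \subseteq \itr \tilde\Omega$, the interior of $\tilde\Omega$ is nonempty, so the supporting hyperplane theorem at the boundary point $y$ produces a nonzero $z$ with $\la z, y^* - y \ra \leq 0$ for every $y^* \in \tilde\Omega$.

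It remains to extract the three required properties of $z$ from this single inequality. Choosing $y^* = y - c$ with $c \in \C$ arbitrary gives $\la z, c\ra \geq 0$, hence $z \in \C^\odot = \C$; choosing $y^* = y'$ with $y' \in \Omega$ (and $c = 0$) gives $z \in \nn(y)$. Since $y, z \in \C$ and $\C$ is self-dual, $\la z, y\ra \geq 0$; were this an equality, every $y' \in \Omega$ would satisfy $0 \leq \la z, y'\ra \leq \la z, y\ra = 0$, so $\la z, \cdot\ra$ would vanish on the nonempty open set $\itr\Omega$, forcing $z = 0$ and contradicting the choice of $z$. The delicate point is the second paragraph: it is precisely the interplay between $\C$-monotonicity and the ray hypothesis $\lambda y \notin \cl\Omega$ that rules out the degenerate configuration $y \in \itr\tilde\Omega$, which would otherwise leave all candidate normals orthogonal to $y$ and derail the conclusion.
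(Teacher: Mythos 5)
Your proof is correct, and it takes a genuinely different route from the paper's. The paper works directly with $\Omega$: it constructs a sequence of boundary points $y_n\in\bd\Omega\cap\itr\C$ converging to $y$, shows by a local perturbation argument (sliding $y_{n,\eps}-a$ for $a\in\C\cap B(0,\delta)$, which stays in $\Omega$ by monotonicity because $y_n$ sits in the \emph{interior} of $\C$) that each normal $z_n\in\nn(y_n)$ lies in $\C$, and then passes to a weak limit $z$ of the normalized $z_n$, obtaining $\la z,y\ra\geq\eps>0$ from the observation that $\eps z_n\in\Omega$. Your argument replaces all of this with a single application of the Hahn--Banach separation to the downward saturation $\tilde\Omega=\Omega-\C$, for which membership of the supporting normal in $\C^\odot=\C$ is automatic and for which $\Omega\subseteq\tilde\Omega$ hands you $z\in\nn(y)$ at the same time; the ray hypothesis $\lambda y\notin\cl\Omega$ enters only once, to rule out $y\in\itr\tilde\Omega$, and strict positivity of $\la z,y\ra$ follows from the clean non-degeneracy argument that a functional vanishing on the nonempty open set $\itr\Omega$ is zero. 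All the individual steps check out: $0\in\Omega$ and $y\neq 0$, the downward closure of $\Omega$ via $\C$-nondecreasingness, convexity of $\tilde\Omega$ and $\itr\Omega\subseteq\itr\tilde\Omega\neq\emptyset$, and the extension of the separating inequality from $\itr\tilde\Omega$ to all of $\tilde\Omega$ via $\tilde\Omega\subseteq\cl\itr\tilde\Omega$. What your approach buys is brevity and transparency --- no weak compactness or limit interchange is needed, and each hypothesis of the lemma is used exactly once in an identifiable place; what the paper's approach buys is a more constructive, quantitative handle ($\la z,y\ra\geq\eps$ for an explicit $\eps$ tied to an interior point of $\Omega$), though that extra information is not needed for the statement.
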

\begin{proof}

Fix $y$ satisfying the condition. It can happen that $y\not\in\itr\C$, and we want to approximate $y$ by a point in $\mathsf{bd}\,\Omega\cap \mathsf{int}\,\C$. The following construction is illustrated in Figure~\ref{fig}. For every open ball $B\subset \H$ centered at $y$, there is some $\lambda>1$ such that $y'=\lambda y\in \C\cap (B\setminus \mathsf{cl}\,\Omega)$. Due to $\mathsf{int}\,\Omega\neq \emptyset$ and $y\in\Omega$, by Lemma~\ref{lemma:itrconvex}, there is some $y''\in B\cap \mathsf{int}\,\Omega\subset \mathsf{int}\,\C$. For $\rho\in[0,1]$, we set
\begin{equation*}
    y_\rho= \rho y'+(1-\rho)y''\in B.
\end{equation*}
Then, we take
\begin{align*}
    \rho_0=\sup\{\rho\in[0,1]:y_\rho\in\mathsf{int}\,\Omega\}.
\end{align*}
Since $y'\notin\mathsf{cl}\,\Omega$, we must have $\rho_0<1$. It can be seen that $y_{\rho_0}\in\mathsf{cl}\,\Omega\setminus\mathsf{int}\,\Omega$ and thus $y_{\rho_0}\in B\cap\mathsf{bd}\,\Omega$. Due to $y'\in\C$, $y''\in\mathsf{int}\,\C$ and Lemma~\ref{lemma:itrconvex}, we have $y_{\rho_0}\in \mathsf{int}\,\C$. In summary, we obtain $y_{\rho_0}\in B\cap\mathsf{bd}\,\Omega\cap \mathsf{int}\,\C$.

\begin{figure}
    \centering
    
    \begin{tikzpicture}
        \node (0) at (0, 0) {};
        \node (1) at (11, 0) {};
        \node (3) at (7, 0) {};
        \node (4) at (8, 8) {};
        \node (5) at (6, 6) {};
        \node (6) at (6.955, 4) {};
        \node (7) at (5.75, 2.25) {};
        \node (8) at (9.25, 3.75) {};
        \node (9) at (7.2, 2.875) {};
        \node (10) at (7.6, 2.875) {};
        \node (11) at (3, 1) {$\Omega$};
        \tkzLabelPoint[right](6){$y$}
        \tkzLabelPoint[below](7){$y''$}
        \tkzLabelPoint[below](8){$y'$}
        \tkzLabelPoint[below](10){$y_{\rho_0}$}
        \foreach \n in {6,7,8,9}
        \node at (\n)[circle,fill,inner sep=1.5pt]{};
        \draw[black](6.955, 4) circle (3);
        \draw[thick] (0.center) to (1.center);
        \draw[thick] (0.center) to (4.center);
        \draw[thick] [bend left, looseness=0.75] (5.center) to (3.center);
        \draw[dotted,thick] (7) to (8);
\end{tikzpicture}
    \caption{Construction of $y_{\rho_0}$.}
    \label{fig}
\end{figure}
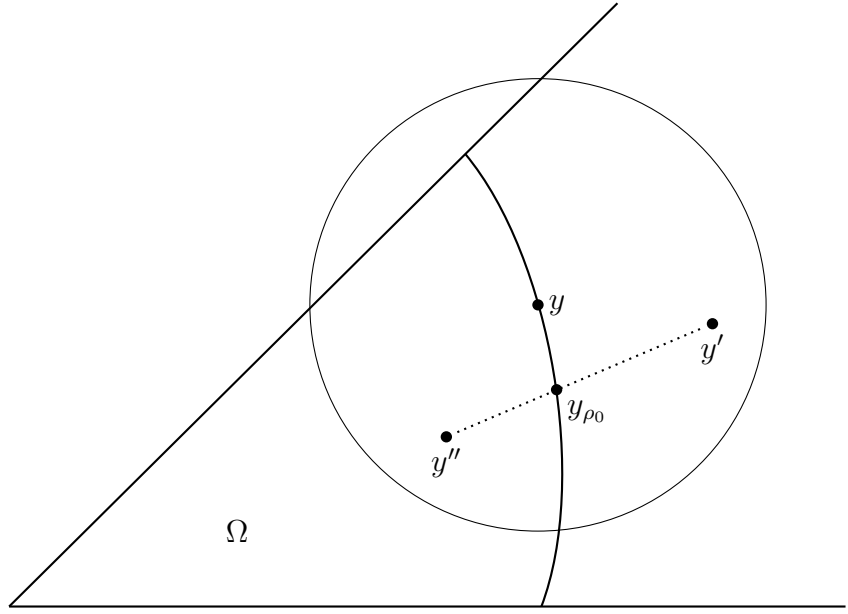{}

By this construction and varying the size of the open balls centered at $y$, we can find a sequence $\{y_n\}_{n=1}^\infty$ such that
\begin{gather}
    y_n\in \itr \C,\label{eq:y_n_int}\\
    y_n\in\mathsf{bd}\,\Omega,\label{eq:y_n_bd}\\
    \lim_{n\rightarrow \infty}y_n=y.\label{eq:y_n_lim}
\end{gather}

\smallskip

Fix any $n$. By \eqref{eq:y_n_int}, there is $\delta >0$ such that
\begin{align}\label{eq:y_n+B_in_C}
    y_n+ B(0,2\delta) \subset\C.
\end{align}
Here, for $a\in\H,r>0$, we write $B(a,r)=\{z\in\H:\ |z-a|<r\}$.
For each $\eps\in(0,\delta)$, due to \eqref{eq:y_n_bd}, we can also find $y_{n,\eps}$ such that
\begin{gather}
    y_{n,\eps}\in\Omega\label{eq:y_n,eps_itr},\\
    |y_{n,\eps}-y_n|<\eps.\label{eq:y_n,eps-y_n}
\end{gather}
This and \eqref{eq:y_n+B_in_C} imply that
\begin{align}
    y_{n,\eps}-a \in \C,\quad \forall \eps \in(0,\delta),\ a \in B(0,\delta).\label{eq:y_n,eps-a}
\end{align}

\smallskip  

By $\C$-nondecreasingness, \eqref{eq:y_n,eps_itr} and \eqref{eq:y_n,eps-a}, we can see
\begin{align*}
    y_{n,\eps}-a\in\Omega,\quad \forall \eps\in(0,\delta),\ a\in \C\cap B(0,\delta).
\end{align*}
Due to \eqref{eq:y_n_bd} and $\itr\Omega\neq \emptyset$, we have that $\mathbf n(y_n)$ contains some nonzero vector $z_n$ (see \cite[Proposition~6.45]{bauschke2011convex} together with \cite[Proposition~6.23~(iii)]{bauschke2011convex}). The definition of the outer normal cone in \eqref{eq:ndef} yields
\begin{align*}
    \la z_n,y_{n,\eps}-a-y_n\ra\leq 0,
\end{align*}
which along with \eqref{eq:y_n,eps-y_n} implies
\begin{equation*}
    \la z_n, a\ra\geq -|z_n|\eps.
\end{equation*}
Sending $\eps\rightarrow 0$ and varying $a\in\C\cap B(0,\delta)$, we conclude that
\begin{align*}
    z_n\in\mathbf n(y_n)\cap \C^\odot,\quad \forall n.
\end{align*}

\smallskip

Now for each $n$, we rescale $z_n$ to get $|z_n|=1$. Since $\C^\odot\cap \cl B(0,1)$ is convex, closed, and bounded, invoking the Banach--Alaoglu--Bourbaki theorem and the Eberlein--\v{S}mulian theorem, by passing to a subsequence, we can assume that there is $z\in\C^\odot$ such that $z_n$ converges weakly to $z$. By $z_n\in\mathbf n(y_n)$, we get
\begin{equation}\label{eq:z_n,w-y_n}
    \la z_n,w-y_n\ra\leq 0,\quad \forall w\in\Omega. 
\end{equation}
The weak convergence of $\{z_n\}_{n=1}^\infty$ along with the strong convergence in \eqref{eq:y_n_lim} implies
\begin{align*}
    \lim_{n\rightarrow \infty}\la z_n,w-y_n\ra=\la z,w-y\ra,\quad \forall w\in\Omega.
\end{align*}
The above two displays yield $z\in\mathbf n(y)\cap \C^\odot$.

\smallskip

Then, we show $\la z, y\ra >0$.
Fix some $x_0\in\itr\Omega$ and some $\eps>0$ such that $B(x_0,2\eps)\subset \Omega$. Let $y_n$ and $z_n$ be given as in the above. Due to $|z_n|=1$, we have
\begin{align*}
    x_0-\eps z_n\in\Omega\subset \C,
\end{align*}
which along with the fact that $z_n\in\C^\odot$ implies that
\begin{align*}
    \la x_0-\eps z_n,z_n\ra \geq 0
\end{align*}
and thus $\la x_0,z_n\ra\geq \eps$. Using $z_n\in \nn(y_n)$, we obtain
\begin{align*}
    \la y_n,z_n\ra \geq \la x_0,z_n\ra \geq \eps.
\end{align*}
Passing to the limit, we conclude that $\la z,y\ra>0$ completing the proof.
\end{proof}

We now go back to our main proof and apply Lemma~\ref{lemma:outernormal} to  $x'\in\Omega$. Hence, there is $z\in \C^\odot$ such that
\begin{gather}\label{eq:znx'}
\la z, w-x'\ra\leq 0,\quad\forall w\in\Omega,\\
\label{eq:zx'>0}\la z,x'\ra>0.
\end{gather}

\smallskip

By \eqref{eq:f=f**_on_cl} and Lemma~\ref{lemma:hyperplane} (or the simple fact that $f\geq f(0)$), there is an affine function $ L_{a,\nu}$ with $a\in\C^\odot$ and $\nu\in\R$ such that $f\geq  L_{a,\nu}$. For each $\rho\geq 0$, define
\begin{equation*}
     \cL_\rho= L_{a+\rho z,\ \nu-\rho\la z,x'\ra}.
\end{equation*}
Due to \eqref{eq:znx'}, we can see that
\begin{align*}
     \cL_\rho(w)&= L_{a,\nu}(w)+ \rho \la z, w-x'\ra \leq  L_{a,\nu}(w)\leq f(w),\quad\forall w\in\Omega.
\end{align*}
Since we know $f\big|_{\C\setminus\Omega}=\infty$, the inequality above gives us
\begin{align}\label{eq:cL_rho<f}
    \cL_\rho\leq f,\quad\forall \rho\geq 0.
\end{align}
Evaluating $\cL_\rho$ at $x$ and using \eqref{eq:x'},
we have
\begin{align*}
     \cL_\rho(x)&= L_{a,\nu}(x)+\rho\la z,x-x'\ra= L_{a,\nu}(x)+\rho(\lambda'^{-1}-1)\la z,x'\ra.
\end{align*}
By \eqref{eq:lambda'<1} and \eqref{eq:zx'>0}, we obtain
\begin{align*}
    \lim_{\rho\rightarrow\infty} \cL_\rho(x)=\infty.
\end{align*}
This along with \eqref{eq:cL_rho<f}, Lemma~\ref{lemma:hyperplane} and \eqref{eq:f>f**} implies
\begin{align*}
    f(x)=f^{**}(x)\quad\forall x \in \C\setminus\cl\Omega.
\end{align*}
In view of this and \eqref{eq:f=f**_on_cl}, we have completed the proof of Proposition~\ref{prop:biconj_int}.

\section{Proof of Theorem~\ref{th:gen}}\label{section:on_perfect}

We devote this section to the proof of Theorem~\ref{th:gen}. As commented in the beginning of the proof of Proposition~\ref{prop:biconj_int} in Section~\ref{section:pf_prop}, assuming $f=f^{**}$, we have $f\in\Gamma_\nearrow(\C)$. 

\smallskip

Now, assuming \ref{H:face}, \ref{H:dual_face} and $f\in\Gamma_\nearrow(\C)$, we want to prove $f=f^{**}$. \rmk{By restricting to $\cspan\C$, we can assume that $\cH = \cspan\C$.} Again, we write $\Omega = \dom f$ which is a nonempty subset of $\C$.
Let us introduce
\begin{align*}
    \F_\Omega = \big\{\lambda y:\  \lambda \geq 0,\, y\in \Omega\big\}.
\end{align*}
We will first show that $f=f^{**}$ holds on $\F_\Omega$ and then on $\C$.

\subsection{Identity on \texorpdfstring{$\F_\Omega$}{F\_Omega}}
We prove $f=f^{**}$ on $\F_\Omega$. The idea is to show $\Omega$ has nonempty interior relative to $\F_\Omega$ and apply Proposition~\ref{prop:biconj_int} to $f$ restricted to $\F_\Omega$. Some properties of $\F_\Omega$ are needed and they are stated and proved in the two lemmas below.

\begin{Lemma}\label{lemma:F_Omega_face}
The set $\F_\Omega$ is a face of $\C$.
\end{Lemma}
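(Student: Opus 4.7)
The plan is to verify the two conditions in the definition of a face: that $\F_\Omega$ is itself a cone (convex and stable under nonnegative scalar multiplication with $0 \in \F_\Omega$), and that it is downward closed under $\preceq$ within $\C$.

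First I would establish that $\F_\Omega$ is a cone. Since $f$ is convex, $\Omega = \dom f$ is convex, and since $f$ is proper and $\C$-nondecreasing with some $y \in \Omega$, applying nondecreasingness to $0 \preceq 0 \preceq y$ gives $f(0) \leq f(y) < \infty$, so $0 \in \Omega$ and thus $0 \in \F_\Omega$. Closure under nonnegative scalar multiplication is immediate from the definition. For additive closure, given $\lambda_1 y_1, \lambda_2 y_2 \in \F_\Omega$ with $y_1, y_2 \in \Omega$ and $\alpha, \beta \geq 0$, set $\mu = \alpha \lambda_1 + \beta \lambda_2$. If $\mu = 0$, the sum is $0 \in \F_\Omega$; otherwise write
\begin{align*}
    \alpha \lambda_1 y_1 + \beta \lambda_2 y_2 = \mu \left( \tfrac{\alpha \lambda_1}{\mu} y_1 + \tfrac{\beta \lambda_2}{\mu} y_2 \right),
\end{align*}
where the element in parentheses is a convex combination of $y_1, y_2 \in \Omega$, hence lies in $\Omega$ by convexity. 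Thus the sum lies in $\F_\Omega$.

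Next I would verify the face condition \eqref{eq:face}. Suppose $y \in \F_\Omega$ with $y = \lambda y_0$ for some $\lambda \geq 0$ and $y_0 \in \Omega$, and suppose $0 \preceq x \preceq y$. In the degenerate case $\lambda = 0$ or $y = 0$, the ordering forces $x = 0 \in \F_\Omega$. Otherwise $\lambda > 0$, and scaling yields $0 \preceq \lambda^{-1} x \preceq y_0$. Since $\lambda^{-1} x \in \C$ and $f$ is $\C$-nondecreasing, we get $f(\lambda^{-1} x) \leq f(y_0) < \infty$, hence $\lambda^{-1} x \in \Omega$. Therefore $x = \lambda (\lambda^{-1} x) \in \F_\Omega$, which completes the face condition.

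I do not anticipate a genuine obstacle; the argument is essentially bookkeeping that combines the convexity of $\dom f$ with the $\C$-nondecreasingness of $f$. The only point requiring slight care is the handling of the degenerate $\lambda = 0$ case and checking that the convex-combination representative used for additive closure indeed lies in $\Omega$ (which follows purely from convexity of $\Omega$, with no appeal to perfectness of $\C$ at this stage).
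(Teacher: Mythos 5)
Your proof is correct and follows essentially the same route as the paper: verify that $\F_\Omega$ is a convex cone and then check the face condition \eqref{eq:face} by rescaling $x$ into $\Omega$ and invoking $\C$-nondecreasingness. The only (harmless) difference is in the additive-closure step, where you normalize the coefficients to produce a convex combination of $y_1,y_2$ directly in $\Omega$, whereas the paper first scales the $y_i$ down into $\Omega$ using nondecreasingness; both work, and your handling of the degenerate $\lambda=0$ case via pointedness of the self-dual cone is fine.
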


\begin{proof}
Recall the definition of a face above Definition~\ref{def:perfect}. Since in this work, we require cones to be convex, to show $\F_\Omega$ is a face, we start by checking it is convex. Note that for any $x_1, x_2 \in \F_\Omega$, there are $\lambda_1, \lambda_2\geq 0$ and $y_1,y_2\in\Omega$ such that
$x_i = \lambda_iy_i$ for $i=1,2$. We can choose $\mu>0$ large enough so that $\frac{\lambda_i}{\mu}y_i\preceq y_i$ for both $i$. Hence, by the $\C$-nondecreasingness of $f$, we have $\frac{\lambda_i}{\mu}y_i\in\Omega$ for both $i$. Then, for each $\alpha\in [0,1]$, it holds that
\begin{align*}
    \alpha x_1 + (1-\alpha)x_2 = \mu\bigg(\alpha \frac{\lambda_1}{\mu}y_1 + (1-\alpha) \frac{\lambda_2}{\mu}y_2\bigg).
\end{align*}
By the convexity of $\Omega$, we have $\alpha \frac{\lambda_1}{\mu}y_1 + (1-\alpha) \frac{\lambda_2}{\mu}y_2\in\Omega$. Hence, we  conclude that $\alpha x_1+ (1-\alpha)x_2\in\F_\Omega$, which implies that $\F_\Omega$ is convex. Then, it is easy to see $\F_\Omega$ is a cone.

\smallskip

Now let $0\preceq x \preceq y$ and $y\in \F_\Omega $. By definition, there is $\mu> 0$ such that $\mu y \in \Omega$. We can deduce that $0\preceq \mu x \preceq \mu y$. Again, the $\C$-nondecreasingness implies $\mu x \in \Omega$ and thus $x\in \F_\Omega$.
\end{proof}

\begin{Lemma}\label{lemma:F_Omega_itr}
Assume \ref{H:face}. The subset $\Omega$ has nonempty interior with respect to the space $\cspan\F_\Omega$.
\end{Lemma}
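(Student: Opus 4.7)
The plan is to exploit the perfectness of $\C$ via Lemma~\ref{lemma:F_Omega_face}, which makes $\F_\Omega$ a face of $\C$. By Definition~\ref{def:perfect}(\ref{item:2}), $\F_\Omega$ has nonempty interior in $H':=\cspan\F_\Omega$, and by (\ref{item:1}) it is self-dual in $H'$. The strategy is to pick a point $z$ in this relative interior, rescale so that twice the rescaled point still lies in $\Omega$, and then use $\C$-nondecreasingness to push a whole $H'$-ball into $\Omega$.

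We may assume $H'\neq\{0\}$, as otherwise the claim is trivial. I would then pick $z$ in the relative interior of $\F_\Omega$ in $H'$ together with $r>0$ such that every $y\in H'$ with $|y-z|<r$ lies in $\F_\Omega$. The case $z=0$ is excluded because it would force, via the cone property of $\F_\Omega$, the equality $\F_\Omega=H'$, and then self-duality of $\F_\Omega$ in $H'$ would give $H'=\{0\}$. Writing $z=\mu w$ with $\mu>0$ and $w\in\Omega$, one finds that $w=\mu^{-1}z$ is simultaneously in the relative interior of $\F_\Omega$ in $H'$ and in $\Omega$, and that every $y\in H'$ with $|y-w|<r/\mu$ belongs to $\F_\Omega$.

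The key step is to introduce the scaling $w_\alpha:=\alpha w$ with $\alpha\in(0,1/2]$. Since $2w_\alpha\preceq w$ and $w\in\Omega$, $\C$-nondecreasingness gives $2w_\alpha\in\Omega$, while every $y\in H'$ with $|y-w_\alpha|<\alpha r/\mu$ lies in $\F_\Omega$. For such a $y$, set $\eta:=y-w_\alpha$; then $2w_\alpha-y=w_\alpha-\eta$ is within $H'$-distance $|\eta|<\alpha r/\mu$ of $w_\alpha$, so it also lies in $\F_\Omega\subset\C$. Hence $y\preceq 2w_\alpha$. Since also $y\in\F_\Omega\subset\C$ and $2w_\alpha\in\Omega$, applying $\C$-nondecreasingness to $0\preceq y\preceq 2w_\alpha$ yields $y\in\Omega$. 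Therefore the whole $H'$-ball of radius $\alpha r/\mu$ around $w_\alpha$ sits inside $\Omega$, so $w_\alpha$ is an interior point of $\Omega$ relative to $H'$.

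The main obstacle I anticipate is the gap between lying in an $H'$-neighborhood of $w_\alpha$ inside $\F_\Omega$ and actually lying in $\Omega$, since a priori $\Omega\subsetneq\F_\Omega$ (points of $\F_\Omega$ with $f=\infty$ are absent from $\Omega$). The resolution is the doubling identity $2w_\alpha-y=w_\alpha-\eta$: after the preliminary scaling that ensures $2w_\alpha\in\Omega$, this identity exhibits each $y$ near $w_\alpha$ as dominated in the $\C$-order by the element $2w_\alpha\in\Omega$, so $\C$-nondecreasingness promotes the $\F_\Omega$-neighborhood of $w_\alpha$ into an $\Omega$-neighborhood.
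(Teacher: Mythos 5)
Your proof is correct, and it takes a genuinely different route from the paper. The paper writes $\F_\Omega=\bigcup_{m,n}E_{m,n}$ with $E_{m,n}=m\{f\leq n\}$, notes each $E_{m,n}$ is closed by lower semicontinuity, and applies the Baire category theorem in the complete space $\cspan\F_\Omega$, using perfectness only through the fact that the face $\F_\Omega$ has nonempty interior there; some $E_{m,n}$ then has interior points, so a sublevel set of $f$ (hence $\Omega$) does too. You instead give a direct order-theoretic argument: since every nonzero point of $\F_\Omega$ is a positive multiple of a point of $\Omega$, the relative interior of $\F_\Omega$ (nonempty by perfectness, and stable under positive scaling because $\F_\Omega$ is a cone) contains a point $w\in\Omega$, and the reflection identity $2w_\alpha-y=w_\alpha-(y-w_\alpha)$ shows every $y$ in a small relative ball around $w_\alpha=\alpha w$ satisfies $0\preceq y\preceq 2w_\alpha$ with $2w_\alpha\in\Omega$, so $\C$-nondecreasingness forces $y\in\Omega$. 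Your degenerate cases ($H'=\{0\}$, $z=0$) are handled correctly, and the key inclusion $B_{H'}(w_\alpha,\alpha r/\mu)\subset\F_\Omega$ follows from cone scaling as you say. What your approach buys is that it is more elementary --- no Baire category and, notably, no use of the lower semicontinuity of $f$ --- and it exhibits explicit relative interior points of $\Omega$, namely $\alpha w$ for $\alpha\in(0,1/2]$. What the paper's approach buys is a soft, short argument that does not require locating a point of $\Omega$ in the relative interior of $\F_\Omega$ or any geometric bookkeeping with balls; it leans instead on hypotheses ($f$ l.s.c.) that are already part of $\Gamma_\nearrow(\C)$.
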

\begin{proof}
For positive integers $m,n\in\N_+$, we set $E_{m,n}=\{mx\in\H:f(x)\leq n\}$ which is the level set $\{f\leq n\}$ scaled by $m$. 
We want to show
\begin{align}\label{eq:union_identity}
    \F_\Omega=\bigcup_{m,n\in\N_+}E_{m,n}.
\end{align}
For each $x\in \F_\Omega$, there is $\mu>0$ such that $y=\mu x \in \Omega$. Then, there is $n\in\N_+$ such that $ y \in \{f\leq n\}$. Choose $m\in\N$ to satisfy $m\mu\geq 1$. Since $f$ is $\C$-nondecreasing and $0\preceq \frac{1}{m\mu }y \preceq y$, it yields that $\frac{1}{m\mu }y \in \{f\leq n\}$, which implies that $x\in E_{m,n}$. The other direction is easy by the definition of $\F_\Omega$. Therefore, we have verified \eqref{eq:union_identity}.

\smallskip

Since $f$ is l.s.c., we know that every $E_{m,n}$ is closed. As a closed subspace of $\H$, the space $\cspan\F_\Omega$ is complete. On the other hand, by \ref{H:face}, the face $\F_\Omega$ also has nonempty interior in $\cspan\F_\Omega$. Hence, invoking the Baire category theorem (see \cite[Section~10.2]{royden2010real}) and taking \eqref{eq:union_identity} into account, we can deduce that there is a pair $m,n$ such that $E_{m,n}$ has nonempty interior in $\cspan\F_\Omega$. This implies that the  interior of $\{f\leq n\}\subset \Omega$ relative to $\cspan\F_\Omega$ is nonempty. Hence, we conclude that $\Omega$ has nonempty interior.
\end{proof}

\bigskip

Let us set $\C' = \F_\Omega$, $\H'=\cspan\F_\Omega$ and $f'$ be the restriction of $f$ to $\C'$. Since $\Omega\subset \F_\Omega$, it is immediate that $\dom f' = \Omega \subset \C'$. Also, due to $f\in\Gamma_\nearrow(\C)$, we have $f'\in\Gamma_\nearrow(\C')$. 
By \ref{H:face} and Lemma~\ref{lemma:F_Omega_face}, $\C'$ is closed and pointed in $\H'$.
Lemma~\ref{lemma:F_Omega_itr} guarantees that $\dom f'$ has nonempty interior in $\H'$. Therefore, invoking Proposition~\ref{prop:biconj_int}, we obtain
\begin{align}\label{eq:f'_biconj}
    {f'}(x) = {f'}^{*'*'}(x),\quad \forall x\in \C'.
\end{align}
Here, 
\begin{gather*}
    {f'}^{*'}(y)=\sup_{z\in\C'}\{\la z, y\ra-f'(z)\},\quad \forall y\in {\C'}^\vee,
    \\
    {f'}^{*'*'}(x)=\sup_{y\in{\C'}^\vee}\{\la y, x\ra-{f'}^{*'}(y)\},\quad \forall x\in \C',
\end{gather*}
where ${\C'}^\vee$ is the dual cone of $\C'$ in $\H'$.
Due to $\Omega\subset \C'$,
\begin{align}\label{eq:f=infty_out_C'}
    f(z)=\infty,\quad\forall z\not\in \C'.
\end{align}
By this and \eqref{eq:def_f*}, we have
\begin{align*}
    f^*(y) ={f'}^{*'}(y),\quad \forall y \in {\C'}^\vee.
\end{align*}
Using \ref{H:dual_face}, the definition of $f'$ and \eqref{eq:f=infty_out_C'}, we can see that
\begin{align*}
    \{ L_{a,\nu}:a\in{\C'}^\vee,\,\nu\in\R \text{ such that } L_{a,\nu}\leq f'\text{ on }\C'\}
    \\
    \subset \{ L_{a,\nu}:a\in\C^\odot,\,\nu\in\R \text{ such that } L_{a,\nu}\leq f\text{ on }\C\},
\end{align*}
which together with Lemma~\ref{lemma:hyperplane} implies that
\begin{align*}
    f^{**}(x)\geq {f'}^{*'*'}(x),\quad\forall x\in \C'.
\end{align*}
This along with \eqref{eq:f'_biconj} and $f=f'$ on $\C'$ yields $f^{**}\geq f$ on $\C'$. Lastly, from \eqref{eq:f>f**}, we conclude that
\begin{align}\label{eq:f**=f_F_Omega}
    f(x)=f^{**}(x),\quad\forall x\in \F_\Omega.
\end{align}

\subsection{Identity on \texorpdfstring{$\C$}{C}}

Due to \eqref{eq:f**=f_F_Omega}, we only need to show $f(x)=f^{**}(x)$ for $x\in\C\setminus\F_\Omega$. To start, we record useful properties of faces in the ensuing two lemmas.

\begin{Lemma}\label{lemma:face_on_bdy}
Let $\F$ be a face of a cone $\C\subset\H$. If $\F\neq \C$, then $\F\cap\itr\C=\emptyset$ and thus $\F\subset \bd \C$.

\end{Lemma}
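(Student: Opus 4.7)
The plan is to derive both conclusions from a single key observation:
\emph{if some $x\in\F$ has the property that an open ball $B(x,r)\subset\H$ is contained in $\C$, then necessarily $\F=\C$.}

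Granting this observation, the first conclusion $\itr\F=\emptyset$ is immediate by contradiction: any $x\in\itr\F$ would satisfy $B(x,r)\subset\F\subset\C$ for some $r>0$, forcing $\F=\C$, which contradicts the hypothesis $\F\neq\C$. The second conclusion $\F\subset\bd\C$ follows by the same device applied slightly differently: if some $x\in\F\cap\itr\C$ existed, then picking $r>0$ with $B(x,r)\subset\C$ would again force $\F=\C$, so in fact $\F\cap\itr\C=\emptyset$; since $\F\subset\C\subset\cl\C$, this gives $\F\subset\cl\C\setminus\itr\C=\bd\C$.

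To establish the observation itself, I would fix an arbitrary $z\in\C$ and choose $\epsilon>0$ small enough that $\epsilon|z|<r$. Then $x-\epsilon z\in B(x,r)\subset\C$, which means $\epsilon z\preceq x$; at the same time $\epsilon z\in\C$ since $\C$ is a cone. Hence $0\preceq \epsilon z\preceq x$ with $x\in\F$, and the face property \eqref{eq:face} forces $\epsilon z\in\F$. Scaling by $1/\epsilon$, using that $\F$ is a cone, yields $z\in\F$, so $\C\subset\F$ and the observation follows.

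The only mildly delicate step is choosing an appropriate ``witness'' in $\F$ that dominates an arbitrary $z\in\C$ after rescaling: the ball $B(x,r)\subset\C$ is precisely what lets one sandwich $\epsilon z$ between $0$ and an element $x\in\F$, at which point the face property takes over. Beyond this single geometric point, both halves of the claim reduce to the same argument, and I do not anticipate any substantive obstacle.
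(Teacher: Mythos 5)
Your proof is correct, and the core mechanism is the same as the paper's: sandwich a rescaled copy of an arbitrary $z\in\C$ between $0$ and an element of $\F$, invoke the face property \eqref{eq:face}, and rescale. The one genuine difference is the direction of the perturbation: the paper takes $x\in\itr\F$ and uses $x+\eps z\in\F$ as the dominating element, whereas you only require $B(x,r)\subset\C$ and dominate by $x$ itself via $x-\eps z\in\C$. This weaker hypothesis is a real gain for the second conclusion: it shows directly that $\F\cap\itr\C=\emptyset$, hence $\F\subset\bd\C$, whereas the paper's closing remark (``$\itr\F\subset\itr\C$, so $\F\subset\bd\C$'') does not literally follow from $\itr\F=\emptyset$ alone and implicitly needs the reverse inclusion $\F\cap\itr\C\subset\itr\F$. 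Your single-observation formulation cleanly covers both halves without that extra step.
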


\begin{proof}
Let us argue by contradiction and suppose that there is $x\in\F\cap \itr\C $. Then for every $y\in\C$, we can find $\eps>0$ small so that $x-\eps y \in \C$ and thus $0\preceq \eps y \preceq x$.
Then, the definition of faces implies that $\eps y\in\F$. Since $\F$ is a cone and $\eps>0$, we obtain $y\in\F$ which implies $\C\subset\F$ and thus $\C=\F$, contradicting the assumption that $\F\neq \C$. Therefore, the desired result holds.
\end{proof}

\begin{Lemma}\label{lemma:v}

Assume \ref{H:dual_face}. Let $\F$ be a face of $\C$. For every $x\in \C\setminus\F$, there is $v\in\C^\odot$ such that $\la v,x\ra >0$ and
\begin{align*}
    \la v,y\ra =0,\quad \forall y\in\F.
\end{align*}
\end{Lemma}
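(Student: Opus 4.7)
The plan is to build $v$ from the conjugate face
\[
\F^\circ := \C \cap (\cspan \F)^\perp = \{u \in \C : \la u, y\ra = 0 \text{ for all } y \in \F\}.
\]
First I would check that $\F^\circ$ is itself a face of $\C$: if $0 \preceq a \preceq b$ with $b \in \F^\circ$, then for each $y \in \F$ the self-duality of $\C$ makes both $\la a, y\ra$ and $\la b - a, y\ra$ nonnegative, while their sum $\la b, y\ra$ vanishes, so both must vanish. Invoking perfectness for this face, pick a relative interior point $v_0$ of $\F^\circ$ in $\cspan \F^\circ$, and set $v = v_0$; then $v \in \C$ and $\la v, y\ra = 0$ for every $y \in \F$ by construction.

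The remaining task is to show $\la v_0, x\ra > 0$ whenever $x \in \C \setminus \F$. Self-duality of $\C$ already gives $\la v_0, x\ra \geq 0$. Suppose for contradiction that $\la v_0, x\ra = 0$. Since $v_0$ is interior in $\F^\circ$, for any $w \in \cspan \F^\circ$ of small norm we have $v_0 + w \in \F^\circ \subset \C$, so $\la v_0 + w, x\ra \geq 0$, and considering the signs $\pm w$ forces $\la w, x\ra = 0$ for all $w \in \cspan \F^\circ$. Hence $x$ lies in $\F^{\circ\circ} := \C \cap (\cspan \F^\circ)^\perp$, and the lemma reduces to the key identity $\F^{\circ\circ} = \F$.

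For this identity, observe that $\F^{\circ\circ}$ is a face of $\C$ containing $\F$, and by perfectness both $\F$ and $\F^{\circ\circ}$ are self-dual in their own closed spans; two comparable self-dual cones sharing an ambient Hilbert space must coincide, so it suffices to prove $\cspan \F = \cspan \F^{\circ\circ}$. A useful preliminary is that $\F \neq \C$ forces $\F^\circ \neq \{0\}$: a relative interior point $y_0 \in \itr_{\cspan \F} \F$ must lie on $\bd \C$ by Lemma~\ref{lemma:face_on_bdy}, so a supporting functional at $y_0$ together with self-duality of $\C$ produces a nonzero $\xi \in \C$ with $\la \xi, y_0\ra = 0$, and the relative interiority of $y_0$ upgrades this to $\xi \perp \F$, giving $\xi \in \F^\circ \setminus \{0\}$. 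Consequently $\F^{\circ\circ} = \C$ would entail $\F^\circ = \{0\}$ and hence $\F = \C$, so whenever $\F \subsetneq \C$ the span $\cspan \F^{\circ\circ}$ is a proper closed subspace of $\H$.

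The main obstacle is proving $\cspan \F^{\circ\circ} = \cspan \F$. In finite dimensions this follows by induction on $\dim \H$: the face $\F^{\circ\circ}$ is itself a perfect cone in $\cspan \F^{\circ\circ}$ of strictly smaller dimension, $\F$ is a face of $\F^{\circ\circ}$, and the complementary face of $\F$ inside $\F^{\circ\circ}$ equals $\F^{\circ\circ} \cap (\cspan \F)^\perp = \C \cap (\cspan \F + \cspan \F^\circ)^\perp$, which is $\{0\}$ because any such element would lie in $\F^\circ$ yet be orthogonal to itself. The inductive hypothesis applied to the perfect cone $\F^{\circ\circ}$ with its face $\F$ then rules out any $u \in \F^{\circ\circ} \setminus \F$, forcing $\F^{\circ\circ} = \F$. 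In infinite dimensions this induction is unavailable, and one must instead leverage Definition~\ref{def:perfect} in full---namely, the assumption that \emph{every} face, not only $\F$ and $\F^\circ$, is self-dual in its own span with nonempty relative interior---to directly rule out any nonzero component of an element of $\F^{\circ\circ}$ in the subspace $(\cspan \F + \cspan \F^\circ)^\perp$.
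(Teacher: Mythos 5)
Your reduction is sound as far as it goes: $\F^\circ=\C\cap(\cspan\F)^\perp$ is indeed a face of $\C$, a relative interior point $v_0$ of it is nonzero once $\F^\circ\neq\{0\}$, and the perturbation argument correctly shows that $\la v_0,x\ra=0$ would force $x\in\F^{\circ\circ}:=\C\cap(\cspan\F^\circ)^\perp$. But the entire weight of the lemma then rests on the biduality identity $\F^{\circ\circ}=\F$, and this you do not prove. Your induction on $\dim\H$ is plausible in finite dimensions, but the lemma is stated for, and the paper applies it to, perfect cones in infinite-dimensional Hilbert spaces (see Lemma~\ref{lemma:circ_perfect}), and you explicitly concede that the induction is unavailable there, offering only the remark that one must ``leverage Definition~\ref{def:perfect} in full'' to rule out the extra orthogonal component. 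That is a statement of the remaining problem, not an argument; the proof is incomplete precisely at its crux.

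The paper's proof avoids biduality altogether and is dimension-free. It passes to the \emph{minimal} face $\F'$ containing both $\F$ and $x$, notes that $\F\cap\mathring\F'=\emptyset$ by Lemma~\ref{lemma:face_on_bdy} (where $\mathring\F'$ is the interior of $\F'$ relative to $\H'=\cspan\F'$), and separates $\F$ from $\mathring\F'$ by Hahn--Banach inside $\H'$. The separating vector $v$ vanishes on $\F$ and is nonnegative on $\F'$, hence lies in $\F'\subset\C$ by self-duality of $\F'$ in $\H'$; and $\la v,x\ra>0$ follows because otherwise $\{y\in\H:\la v,y\ra=0\}\cap\F'$ would be a face strictly contained in $\F'$ (it misses $v$) containing both $\F$ and $x$, contradicting minimality. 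To salvage your route you would essentially have to establish $\F^{\circ\circ}=\F$ by a similar minimality-plus-separation device, at which point you have reproduced the paper's argument; as written, your proposal has a genuine gap.
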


\begin{proof}

We take $\F'$ to be the intersection of all faces of $\C$ containing both $\F$ and $x$. It can be checked that $\F'$ is again a face of $\C$. Hence, $\F'$ is the minimal face containing both $\F$ and $x$.  Let us write
\begin{align}\label{eq:H'=spanF'}
    \H' =\cspan \F'
\end{align}
and denote by $\mathring\F'$ the interior of $\F'$ with respect to $\H'$. By \ref{H:face}, we have $\mathring\F'\neq \emptyset$ and that $\F'$ is closed. Since $\F$ is clearly a face of $\F'$,  Lemma~\ref{lemma:face_on_bdy} applied to $\F\subset\F'$ yields $\F\cap  \mathring \F' =\emptyset$.

\smallskip

By the Hahn--Banach separation theorem (c.f. \cite[Theorem 1.6]{brezis2010functional}), there are $\alpha\in\R$ and a nonzero vector $w\in\H'$ such that 
\begin{align}
    \la w, y\ra \leq \alpha ,\quad\forall y\in \F,\label{eq:leq_alpha}\\
    \la w, z\ra \geq  \alpha,\quad\forall z\in \mathring\F'.\label{eq:geq_alpha}
\end{align}
Since $\F'$ is closed and convex, and $\mathring\F'\neq \emptyset$, by \cite[Proposition~3.36 (iii)]{bauschke2011convex}, we have that the closure of $\mathring\F'$ is $\F'$. Hence, \eqref{eq:geq_alpha} becomes
\begin{align}
    \la w, z\ra \geq  \alpha,\quad\forall z\in \F'.\label{eq:geq_alpha_1}
\end{align}
By \eqref{eq:face}, we have $0\in\F$. Due to this and $\F\subset\F'$, using \eqref{eq:leq_alpha} and \eqref{eq:geq_alpha_1}, we must have $\alpha=0$ and
\begin{align}
    \la w, y\ra =0 ,\quad\forall y\in \F.\label{eq:=_0_F}
\end{align}
Then, \eqref{eq:geq_alpha_1} is turned into $\la w, z\ra \geq 0$ for all $z\in \F'$
which implies that
\begin{align}\label{eq:v_in_C}
    w\in {\F'}^\vee \end{align}
where ${\F'}^\vee$ is the dual cone of $\F'$ in $\H'$. Due to \ref{H:dual_face}, there is $v\in \C^\odot$ such that
\begin{align}\label{eq:vz=wz}
    \la v,z\ra =\la w, z\ra ,\quad\forall z \in \H'.   
\end{align}

\smallskip

Now, we consider the null space of the linear map $y\mapsto \la v,y\ra$ given by
\begin{align}\label{eq:def_E}
    \E = \{y\in\H:\la v, y\ra =0\}.
\end{align}
We want to show $\E\cap\F'$ is a face of $\C$. It is clear that $\E\cap\F'$ is a cone. For $y\in\E\cap\F'$ and $z\in\C$ satisfying $0\preceq z \preceq y$, by $v\in\C^\odot$, we obtain
\begin{gather*}
    \la v, y-z\ra \geq 0,\\
    \la v, z \ra \geq 0.
\end{gather*}
Due to $y\in\E$, the above two displays yield $\la v, z\ra =0$ which implies that $z\in\E$. Since $\F'$ is a face, by $0\preceq z \preceq y$ and $y\in\F'$, we also have $z\in\F'$. Hence, we have $z\in \E\cap \F'$ and thus verified that $\E\cap\F'$ is a face of $\C$.

We claim that
\begin{align}\label{eq:E_F'}
    \E\cap\F'\neq \F'.
\end{align}
Otherwise, we have $\F'\subset \E$, which due to \eqref{eq:H'=spanF'} implies that $\H'\subset \E$. However, this along with \eqref{eq:vz=wz} means that $\la w,w\ra=0$ contradicting the fact that $w\neq 0$. Hence, \eqref{eq:E_F'} is valid.

\smallskip

To conclude, we argue that
\begin{align}\label{eq:x_not_in_E}
    x\not\in \E.
\end{align}
Otherwise, since $\F'$ contains $x$ by the definition of $\F'$, we have $x \in \E\cap \F'$. From \eqref{eq:=_0_F}, \eqref{eq:vz=wz} and \eqref{eq:def_E}, we can deduce that $\F\subset\E$ and thus $\F\subset \E\cap\F'$. Therefore, $\E\cap\F'$ is a face containing both $x$ and $\F$. However, this together with \eqref{eq:E_F'} contradicts the fact that $\F'$ is chosen to be the minimal face containing $x$ and $\F$.

\smallskip

Therefore, by contradiction, we conclude that \eqref{eq:x_not_in_E} must hold. Then, by $x\in\F'$ and \eqref{eq:v_in_C}, we must have $\la v,x\ra>0$. In view of this, \eqref{eq:=_0_F} and \eqref{eq:vz=wz}, the vector $v$ satisfies all the desired properties.
\end{proof}

With these results, we resume the proof of $f=f^{**}$ on $\C\setminus\F_\Omega$. Fix any $x\in \C\setminus \F_\Omega$. For each $\rho>0$, we set
\begin{align*}
    \cL_\rho = L_{\rho v,\ f(0)},
\end{align*}
with $v\in\C^\odot$ given in Lemma~\ref{lemma:v} corresponding to this $x$ and $\F=\F_\Omega$. This lemma implies that $v$ is perpendicular to $\F_\Omega$ and thus
\begin{align*}
    \cL_\rho(y) = \rho\la v, y\ra +f(0)=f(0),\quad\forall y\in\F_\Omega.
\end{align*}
Then, the $\C$-nondecreasingness of $f$ implies that
\begin{align*}
    f(y)\geq   \cL_\rho(y),\quad\forall y\in\F_\Omega.
\end{align*}
Since we know $f=\infty$ on $\C\setminus\F_\Omega$, we obtain
\begin{align*}
    f\geq \cL_\rho,\quad\forall \rho>0.
\end{align*}
On the other hand, due to $\la v, x\ra>0$ in Lemma~\ref{lemma:v}, we have 
\begin{align*}
    \lim_{\rho\to\infty}\cL_\rho(x)=\infty=f(x).
\end{align*}
Hence, by the above two displays, \eqref{eq:f>f**} and Lemma~\ref{lemma:hyperplane}, we conclude that $f(x)=f^{**}(x)$ for  $x\in \C\setminus\F_\Omega$. This together with \eqref{eq:f**=f_F_Omega} completes the proof of Theorem~\ref{th:gen}.

\subsection{Proof of Corollary~\ref{cor:on_perfect}}
Recall the notion of perfect cones in Definition~\ref{def:perfect}. We verify that any perfect cone $\C$ satisfies \ref{H:face} and \ref{H:dual_face}. For any face $\F$ of $\C$, by Definition~\ref{def:perfect}~\eqref{item:1}, $\F$ is self-dual in $\cspan\F$ and thus closed. Hence, \ref{H:face} follows from this and Definition~\ref{def:perfect}~\eqref{item:2}. Lastly, due to $\F\subset\C$, $\F^\vee=\F$ and $\C^\vee=\C$, it is immediate that $\F^\vee\subset \P_{\cspan F}(\C^\vee)$ and thus \ref{H:dual_face} holds. Therefore, Theorem~\ref{th:gen} yields Corollary~\ref{cor:on_perfect}.

\section{Examples of perfect cones}\label{section:examples}

We show that the set of positive semidefinite matrices is a perfect cone, and that an infinite-dimensional circular cone is perfect.

\subsection{Positive semidefinite matrices}

Let $n\in\N\setminus\{0\}$ and denote by $\S^n$ the set of all $n\times n$ symmetric matrices, by $\S^n_+$ the set of all $n\times n$ positive semidefinite matrices, and by $\S^n_{++}$ the set of all $n\times n$ positive definite matrices. On $\S^n$, we define the inner product by
\begin{align*}
    \la x , y\ra = \tr(x y),\quad \forall x,y\in\S^n,
\end{align*}
where $\tr$ is the trace of a matrix. Hence, $\S^n$ is a Hilbert space with dimension $n(n+1)/2$. The goal is the following.

\begin{proposition}\label{lemma:psd_perfect}
For each positive integer $n$, the set $\S^n_+$ is a perfect cone in $\S^n$.
\end{proposition}

To start, it is well-know that $\S^n_+$ is self-dual, which is attributed often to Fej\'er (see, e.g.\ \cite[Theorem 7.5.4]{horn2012matrix}). For completeness of presentation, we prove it below.

\begin{Lemma}\label{lemma:psd}
Let $x\in\S^n$. Then, $x\in\S^n_+$ if and only if $\la x, y\ra \geq0 $ for every $y\in\S^n_+$.
\end{Lemma}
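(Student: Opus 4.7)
The statement is the well-known self-duality of $\S^n_+$ under the Frobenius inner product, so the plan is just to verify both inclusions directly using the eigen-decomposition of symmetric matrices.

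For the ``only if'' direction, I would fix $x, y \in \S^n_+$ and spectrally decompose $x = \sum_{i=1}^n \lambda_i v_i v_i^\intercal$ with $\lambda_i \geq 0$ and an orthonormal family $v_1, \ldots, v_n \in \R^n$. Using linearity of the trace and the cyclic property, one computes
\begin{align*}
    \la x, y\ra = \tr(x y) = \sum_{i=1}^n \lambda_i \tr(v_i v_i^\intercal y) = \sum_{i=1}^n \lambda_i \, v_i^\intercal y v_i,
\end{align*}
and each summand is nonnegative because $\lambda_i \geq 0$ and $y \in \S^n_+$ yields $v_i^\intercal y v_i \geq 0$. Hence $\la x, y\ra \geq 0$.

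For the ``if'' direction, I would use the fact that $x \in \S^n_+$ is equivalent to $v^\intercal x v \geq 0$ for every $v \in \R^n$, together with the observation that the rank-one matrix $v v^\intercal$ lies in $\S^n_+$. Given any $v \in \R^n$, the hypothesis applied to $y = v v^\intercal$ gives
\begin{align*}
    0 \leq \la x, v v^\intercal\ra = \tr(x v v^\intercal) = v^\intercal x v,
\end{align*}
which is exactly what is needed.

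There is essentially no obstacle here; the only thing to be slightly careful about is that we may use $\tr(x^\intercal y) = \tr(xy)$ because both $x$ and $y$ are symmetric, and that $v_i v_i^\intercal$ (respectively $vv^\intercal$) belongs to $\S^n_+$, both of which are immediate.
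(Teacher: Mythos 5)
Your proof is correct and takes essentially the same route as the paper's: both implications are verified directly from elementary spectral facts, with rank-one p.s.d.\ matrices used as test elements for the converse. The only cosmetic differences are that the paper writes $\la x,y\ra=\tr(\sqrt{x}\sqrt{y}\sqrt{y}\sqrt{x})\geq 0$ instead of expanding $x$ into rank-one terms for the forward direction, and diagonalizes $x$ before testing rather than testing with arbitrary $vv^\intercal$.
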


\begin{proof}
If $x \in \S^n_+$, then for any $y\in\S^n_+$ we have $\la x, y\ra =\tr(\sqrt{x}\sqrt{y}\sqrt{y}\sqrt{x})\geq 0$. For the other direction, by choosing an orthonormal basis, we may assume that $x$ is diagonal. Testing by $y\in\S^n_+$, we can show that all diagonal entries in $x$ are nonnegative and thus $x\in\S^n_+$.
\end{proof}

\begin{proof}[Proof of \rmk{Proposition}~\ref{lemma:psd_perfect}]

Given the above lemma, we only need to verify the conditions on the faces of $\S^n_+$ stated in Definition~\ref{def:perfect}. Let $\F$ be a face of $\S^n_+$.

\smallskip

The cases $\F=\{0\}$ and $\F=\S^n_+$ are trivial, so we assume $\{0\}\subsetneq\F\subsetneq \S^n_+$. Lemma~\ref{lemma:face_on_bdy} implies $\F\subset \bd\S^n_+=\S^n_+\setminus\S^n_{++}$. Set
\begin{equation}\label{max:m}
    m=\max\big\{\mathsf{rank}(z):\ z\in\F\big\},
\end{equation}
where $\mathsf{rank}(z)$ is the rank of the matrix $z$. By our assumption on $\F$, we must have $1\leq m< n$. For each $k\in\N\setminus\{0\}$, we denote by $\boldsymbol{0}_k$ the $k\times k$ zero matrix. Due to \eqref{max:m}, there is $x\in\F$ with $\mathsf{rank}(x)=m$. By fixing a suitable orthonormal basis, we may assume 
\begin{align}\label{eq:x=diag}
    x=\diag (\lambda_1,\lambda_2,\cdots,\lambda_m,\boldsymbol{0}_{n-m}),
\end{align}
where $\lambda_j>0$ for all $1\leq j\leq m$.

\smallskip

Let us consider the following set
\begin{align}\label{eq:E_matrix}
    \E = \Big\{\diag (y^\circ,\boldsymbol{0}_{n-m}):\quad y^\circ\in\S^m_+\Big\}\subset \S^n_+.
\end{align}
We now show $\E = \F$. First, we want to prove $\F\subset\E$. In other words, we claim that for every $y\in\F$, there is $y^\circ\in\S^m_+$ such that
\begin{equation}\label{eq:ydiagform}
    y=\diag (y^\circ,\boldsymbol{0}_{n-m}).
\end{equation}
Let us argue by contradiction. Suppose that \eqref{eq:ydiagform} does not hold for all $y\in\F$, then we can find $y\in\F$ with $y_{jk}\neq 0$ for some $j>m$ or $k>m$. Assuming the former without loss of generality, we compute $v^\intercal y v$ for $v=te^j+e^k$ and vary $t\in \R$ where $e^j$ and $e^k$ belong to the standard basis for $\R^n$. Then, due to $y\in \S^n_+$, we must have $y_{jj}>0$. By reordering the basis, we may assume $j=m+1$, and thus
\begin{align}\label{eq:y_m+1>0}
    y_{m+1,\, m+1}>0.
\end{align}
Let $\hat y=(y_{ij})_{1\leq i,\,j\leq m+1}\in\S^{m+1}_+$, and we define $\hat x$ similarly. Then, we want to show $\rank(\hat x +\hat y)=m+1$.
Let $v\in\R^{m+1}\setminus\{0\}$. If $v_j\neq 0$ for some $1\leq j\leq m$, then we have
\begin{align*}
    v^\intercal (\hat x+\hat y)v \geq v^\intercal \hat x v>0.
\end{align*}
The last inequality follows from \eqref{eq:x=diag}. If $v_j=0$ for all $1\leq j\leq m$, then due to $v\neq 0$, we must have $v_{m+1}\neq 0$, and by \eqref{eq:y_m+1>0}, we get
\begin{align*}
    v^\intercal (\hat x+\hat y)v \geq v^\intercal \hat y v = y_{m+1,m+1}v^2_{m+1}>0.
\end{align*}
In conclusion, we obtain $v^\intercal (\hat x+\hat y)v>0$, which implies that $\hat x+\hat y\in\S^{m+1}_{++}$ and thus $\mathsf{rank}(x+y)\geq \mathsf{rank}(\hat x+\hat y)=m+1$. Since $\F$ is a cone, we have $x+y\in\F$. But this contradicts the maximality of $m$ as in \eqref{max:m}. Hence, every $y\in\F$ satisfies \eqref{eq:ydiagform}, and thus we verified $\F\subset \E$. 

\smallskip

Now, we turn to the proof of $\E\subset \F$. For every $y$ of the form \eqref{eq:ydiagform}, due to \eqref{eq:x=diag}, there exists a small $\eps>0$ such that $x\succeq \eps y\succeq 0 $ where the partial order $\succeq$ is induced by the cone $\S^n_+$. Indeed, such $\eps$ exists because, viewing $x,y$ as matrices in $\S^m_+$, we can choose $\eps$ sufficiently small so that the absolute values of eigenvalues of $y$ is bounded by $\min_{1\leq j\leq m}{\lambda_j}$. Recall the definition of faces above Definition~\ref{def:perfect}. Since $\F$ is a face, we must have $\eps y\in\F$ and thus $y\in\F$. Hence, we conclude $\E\subset \F$.

\smallskip

Now, we have $\F=\E$. In view of \eqref{eq:E_matrix}, we can identify $\F$ with $\S^m_+$ and $\cspan \F$ with $\S^m$.
We know that $\S^m_+$ is self-dual in $\S^m$ by Lemma~\ref{lemma:psd}, whose interior is given by $\S^m_{++}$ and thus not empty.
Therefore, all conditions on $\F$ in Definition~\ref{def:perfect} are verified.
\end{proof}

\subsection{An infinite-dimensional circular cone}

We consider a generalization of the finite dimensional circular cone $\{x\in\R^{d+1}:(x_1^2+\cdots +x_d^2)^\frac{1}{2}\leq x_0 \}$.
Let $\H= l^2(\N)$ where the elements in $l^2(\N)$ are precisely $x=(x_0,x_1,x_2,\dots)$ with $\sum_{i=0}^\infty x^2_i <\infty$. The inner product on $\H$ is given by 
\begin{align*}
    \la x,y\ra = \sum_{i=0}^\infty x_iy_i,\quad\forall x,y\in\H.
\end{align*}
We denote by $|\cdot|$ the associated norm. For each $x\in\H$, we write $x_{\geq 1}=(0,x_1,x_2,\dots)\in\H$.
We consider the following cone
\begin{align}\label{eq:C_hilbert}
    \C =\{x\in\H:\  |x_{\geq 1}|\leq x_0\}.
\end{align}
The desired result is stated below.

\begin{proposition}\label{lemma:circ_perfect}
The cone $\C$ defined in \eqref{eq:C_hilbert} is perfect in $\H$.
\end{proposition}

To prove this \rmk{proposition}, we start with the following result.

\begin{Lemma}\label{lemma:itr_C}
The interior of $\C$ is nonempty, and given by
\begin{align}\label{eq:itr_C}
    \itr\C = \{x\in \H:\ x_0 >0,\, |x_{\geq 1}|<x_0\}.
\end{align}
\end{Lemma}

\begin{proof}
Let $y$ belong to the set on the right hand side of \eqref{eq:itr_C}. Choose $\eps>0$ such that $y_0-|y_{\geq 1}|>2\eps$. Then, we want to show that, for all $x\in\H$ satisfying $|x-y|<\eps$, we have $x\in\C$. We can see that
\begin{align*}
    (x_0-y_0)^2+|x_{\geq 1}-y_{\geq 1}|^2 = |x-y|^2 <\eps^2.
\end{align*}
This yields $|x_0-y_0|<\eps$ and $|x_{\geq 1}-y_{\geq 1}|<\eps$. Now, using these, the property of $\eps$ and the triangle inequality, we get
\begin{align*}
    |x_{\geq 1}|\leq|y_{\geq 1}|+\eps<(y_0-2\eps)+\eps = y_0-\eps \leq x_0.
\end{align*}
Hence, we have $x\in \C$ and can deduce that the right side of \eqref{eq:itr_C} is contained in $\itr\C$. For the other direction, let $y\in\C$ with $|y_{\geq 1}|=y_0$. It is easy to see that every neighborhood of $y$ contains a point not in $\C$. Therefore, we conclude that \eqref{eq:itr_C} holds.
\end{proof}

\medskip

In order to prove the perfectness of $\C$, we need information about its faces. The next lemma classifies all faces of $\C$. The definition of faces are given above Definition~\ref{def:perfect}.

\begin{Lemma}\label{lemma:classify}
Under the above setting, if $\F$ is a face of $\C$, then either $\F=\C$ or there is $x\in\bd\C$ such that $\F=\{\lambda x:\ \lambda \geq 0\}$.
\end{Lemma}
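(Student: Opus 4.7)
The plan is to split into cases based on whether $\F$ meets $\itr\C$, after first identifying the interior and boundary of $\C$ explicitly. A routine perturbation argument using the triangle inequality $|w_{\geq 1}| \leq |z_{\geq 1}| + |w-z|$ and $w_0 \geq z_0 - |w-z|$ shows that $\itr\C = \{z\in\H:\ z_0 > |z_{\geq 1}|\}$ and $\bd\C\cap\C = \{z\in\C:\ |z_{\geq 1}|=z_0\}$; in particular, $0\in\bd\C$.

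Suppose first that $\F$ contains some point $y\in\itr\C$. For any $z\in\C$, pick $\eps>0$ small enough that $y-\eps z\in\C$. Then $0\preceq \eps z\preceq y$, and since $y\in\F$, the face property gives $\eps z\in\F$. Conicity of $\F$ then yields $z\in\F$, so $\F=\C$.

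In the remaining case $\F\subset\bd\C$. If $\F=\{0\}$, take $x=0\in\bd\C$ and the conclusion is immediate. Otherwise fix a nonzero $x\in\F$, which gives $\{\lambda x:\ \lambda\geq 0\}\subset\F$ by conicity. For the reverse inclusion, suppose for contradiction that some $y\in\F$ is not a nonnegative scalar multiple of $x$. Both $x$ and $y$ lie in $\bd\C\setminus\{0\}$, so $x_0,y_0>0$ and $|x_{\geq 1}|=x_0$, $|y_{\geq 1}|=y_0$. The triangle inequality in $\H$ gives
\begin{align*}
    |(x+y)_{\geq 1}|\leq |x_{\geq 1}|+|y_{\geq 1}|=x_0+y_0=(x+y)_0,
\end{align*}
with equality only if $x_{\geq 1}$ and $y_{\geq 1}$ are nonnegative scalar multiples of one another. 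A short case analysis rules out equality here: vanishing of either tail would force the corresponding vector to be $0$, while $x_{\geq 1}=\alpha y_{\geq 1}$ with $\alpha>0$ forces $x_0=\alpha y_0$ and hence $x=\alpha y$, contradicting the choice of $y$. Thus the inequality is strict, which places $x+y$ in $\itr\C$; but $x+y\in\F\subset\bd\C$, a contradiction. Therefore $\F=\{\lambda x:\ \lambda\geq 0\}$.

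The main (mild) obstacle is the equality analysis in the triangle inequality, which supplies the rigidity that two linearly independent boundary rays sum to an interior point. Once this is in hand, the face axiom of $\F$ does all the remaining work.
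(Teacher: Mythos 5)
Your proof is correct and follows essentially the same route as the paper: identify $\itr\C=\{z:z_0>|z_{\geq 1}|\}$, dispose of the case $\F\cap\itr\C\neq\emptyset$ via the face axiom, and then show two boundary elements of $\F$ that are not positive multiples of each other combine to an interior point, contradicting $\F\subset\bd\C$. The only (cosmetic) difference is that you extract the rigidity from the equality case of the triangle inequality applied to $x+y$, while the paper rescales so that $y_0=x_0$ and applies strict convexity of $t\mapsto t^2$ to the midpoint $\tfrac12(x+y)$; both arguments are sound.
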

\begin{proof}
It is clear that $\C$ is a face of itself.
Now we consider the case $\F\neq \C$.
If $\F=\{0\}$, then there is nothing to prove. Hence, let us further assume that there is a nonzero $x\in\F\subset\C$. In particular, due to \eqref{eq:C_hilbert}, we have $x_0>0$. Lemma~\ref{lemma:face_on_bdy} implies  $\F \subset \bd\C$. By Lemma~\ref{lemma:itr_C} and the definition of $\C$, the vector $x$ satisfies
\begin{align}\label{eq:x_>1=x_0}
    |x_{\geq 1 }|=x_0>0.
\end{align}
By definition of faces, $\F$ is a cone. Due to this and $x\in\F$, we have
\begin{align*}\F\supset\{\lambda x:\ \lambda \geq 0\}.
\end{align*}
Now, we show that the above is in fact an equality. Let $y\in \F\setminus\{0\}$. By similar arguments as above, we have $y_0>0$.
Rescaling if needed, we may assume $y_0 =x_0$. Recall that in this work, convexity is built into the definition of cones. Set $z=\frac{1}{2}(x+y)$. Using Jensen's inequality, by \eqref{eq:x_>1=x_0} and an analogous one for $y$, we obtain
\begin{align*}
    x_0^2=|z_{\geq 1}|^2=\sum_{i=1}^\infty \Big(\frac{x_i+y_i}{2}\Big)^2 \leq \sum_{i=1}^\infty \frac{x^2_i+y^2_i}{2}=x^2_0.
\end{align*}
The equality holds only if $x_i=y_i$ for all $i$, so $y=x$ and the proof is complete.
\end{proof}

\begin{proof}[Proof of \rmk{Proposition}~\ref{lemma:circ_perfect}]

We first show that $\C$ is self-dual. Recall that the dual cone is defined in \eqref{eq:C_dual} and denoted by $\C^\odot$. Let $y\in\C^\odot$ and we have
\begin{align}\label{eq:xy_geq_0}
    \la x ,y \ra \geq 0,\quad \forall x\in\C.
\end{align}
Since $(1,0,0,\dots)\in\C$, we get $y_0\geq 0$. We consider two cases depending on whether $y_0=0$ or not. Suppose $y_0=0$, for any fixed $i\geq1$, we construct $x'$ in the following way. Set $x'_0 = 1$, set $x'_i = -1$ if $y_i\geq 0$ and $x'_i=1$ if $y_i<0$, and lastly set all other entries of $x'$ to be zero. Inserting this $x'$ into \eqref{eq:xy_geq_0} and varying $i$, we can see $y=0$ and thus $y\in\C$. Now we consider the case where $y_0> 0$.
If $|y_{\geq 1}|=0$, then this immediately implies $y\in\C$. If $|y_{\geq 1}|\neq 0$, then we set $\gamma = |y_{\geq 1}|^{-1}>0$ and consider $x'$ given by
\begin{align*}
    x'_0 = y_0;\qquad x'_i = -\gamma y_iy_0.
\end{align*}
Plugging $x'$ into \eqref{eq:xy_geq_0} and using $y_0>0$, we obtain $y_0\geq |y_{\geq 1}|$ and thus $y\in\C$, which implies $\C^\vee\subset \C$. Since it is clear that $\C\subset\C^\vee$, we conclude that $\C$ is self-dual.

\medskip

To show $\C$ is perfect, it remains to check the conditions on the faces of $\C$ stated in Definition~\ref{def:perfect}.
Recall that Lemma~\ref{lemma:itr_C} ensures $\itr\C\neq \emptyset$. Hence, if $\F=\C$, then $\F$ is self-dual and has nonempty interior with respect to $\cspan \F$. Now if $\F\neq \C$, then Lemma~\ref{lemma:classify} implies $\F=\{\lambda x: \lambda\geq 0\}$, which is one-dimensional. We can identify $\cspan\F$ with $\R$ and $\F$ with $[0,\infty)$ in an isometric way. Now, it is easy to see that $\F$ is self-dual and has nonempty interior with respect to $\cspan \F$. By Definition~\ref{def:perfect}, we conclude that $\C$ given in \eqref{eq:C_hilbert} is perfect.
\end{proof}

\bibliographystyle{abbrv}

\begin{thebibliography}{10}

\bibitem{bardi1984hopf}
M.~Bardi and L.~C. Evans.
\newblock On {Hopf's} formulas for solutions of {Hamilton--Jacobi} equations.
\newblock {\em Nonlinear Analysis: Theory, Methods \& Applications},
  8(11):1373--1381, 1984.

\bibitem{barker1978faces}
G.~P. Barker.
\newblock Faces and duality in convex cones.
\newblock {\em Linear and Multilinear Algebra}, 6(3):161--169, 1978.

\bibitem{barker1978perfect}
G.~P. Barker.
\newblock Perfect cones.
\newblock {\em Linear Algebra and its Applications}, 22:211--221, 1978.

\bibitem{barker1981theory}
G.~P. Barker.
\newblock Theory of cones.
\newblock {\em Linear Algebra and its Applications}, 39:263--291, 1981.

\bibitem{barker1976self}
G.~P. Barker and J.~Foran.
\newblock Self-dual cones in {Euclidean} spaces.
\newblock {\em Linear Algebra and its Applications}, 13(1-2):147--155, 1976.

\bibitem{bauschke2011convex}
H.~H. Bauschke, P.~L. Combettes, et~al.
\newblock {\em Convex analysis and monotone operator theory in Hilbert spaces},
  volume 408.
\newblock Springer, 2011.

\bibitem{bellissard1978homogeneous}
J.~Bellissard, B.~Iochum, and R.~Lima.
\newblock Homogeneous and facially homogeneous self-dual cones.
\newblock {\em Linear Algebra and its Applications}, 19(1):1--16, 1978.

\bibitem{brezis2010functional}
H.~Brezis.
\newblock {\em Functional analysis, Sobolev spaces and partial differential
  equations}.
\newblock Springer Science \& Business Media, 2010.

\bibitem{chen2020hamilton_nonsym}
H.-B. Chen.
\newblock {Hamilton-Jacobi} equations for nonsymmetric matrix inference.
\newblock {\em arXiv preprint arXiv:2006.05328}, 2020.

\bibitem{chen2020hamilton}
H.-B. Chen and J.~Xia.
\newblock {Hamilton--Jacobi} equations for inference of matrix tensor products.
\newblock {\em arXiv preprint arXiv:2009.01678}, 2020.

\bibitem{dutta2004monotonic}
J.~Dutta, J.~Martinez-Legaz, and A.~Rubinov.
\newblock {Monotonic analysis over cones: I}.
\newblock {\em Optimization}, 53(2):129--146, 2004.

\bibitem{dutta2004monotonic2}
J.~Dutta, J.~Martinez-Legaz, and A.~Rubinov.
\newblock {Monotonic analysis over cones: II}.
\newblock {\em Optimization}, 53(5-6):529--547, 2004.

\bibitem{dutta2008monotonic}
J.~Dutta, J.~E. Mart{\'\i}nez-Legaz, and A.~M. Rubinov.
\newblock {Monotonic analysis over cones: III}.
\newblock {\em Journal of Convex Analysis}, 15(3):561, 2008.

\bibitem{hadjisavvas2006handbook}
N.~Hadjisavvas, S.~Koml{\'o}si, and S.~S. Schaible.
\newblock {\em Handbook of generalized convexity and generalized monotonicity},
  volume~76.
\newblock Springer Science \& Business Media, 2006.

\bibitem{horn2012matrix}
R.~A. Horn and C.~R. Johnson.
\newblock {\em Matrix analysis}.
\newblock Cambridge university press, 2012.

\bibitem{lions1986hopf}
P.-L. Lions and J.-C. Rochet.
\newblock {Hopf} formula and multitime {Hamilton--Jacobi} equations.
\newblock {\em Proceedings of the American Mathematical Society}, 96(1):79--84,
  1986.

\bibitem{moreau1970inf}
J.~J. Moreau.
\newblock Inf-convolution, sous-additivit{\'e}, convexit{\'e} des fonctions
  num{\'e}riques.
\newblock {\em Journal de Math{\'e}matiques Pures et Appliqu{\'e}es}, 1970.

\bibitem{mourrat2018hamilton}
J.-C. Mourrat.
\newblock {Hamilton-Jacobi} equations for mean-field disordered systems.
\newblock {\em arXiv preprint arXiv:1811.01432}, 2018.

\bibitem{mourrat2019hamilton}
J.-C. Mourrat.
\newblock {Hamilton-Jacobi} equations for finite-rank matrix inference.
\newblock {\em arXiv preprint arXiv:1904.05294}, 2019.

\bibitem{mourrat2019parisi}
J.-C. Mourrat.
\newblock Parisi's formula is a {Hamilton-Jacobi} equation in {Wasserstein}
  space.
\newblock {\em arXiv preprint arXiv:1906.08471}, 2019.

\bibitem{mourrat2020free}
J.-C. Mourrat.
\newblock Free energy upper bound for mean-field vector spin glasses.
\newblock {\em arXiv preprint arXiv:2010.09114}, 2020.

\bibitem{mourrat2020nonconvex}
J.-C. Mourrat.
\newblock Nonconvex interactions in mean-field spin glasses.
\newblock {\em arXiv preprint arXiv:2004.01679}, 2020.

\bibitem{penney1976self}
R.~C. Penney.
\newblock Self-dual cones in {Hilbert} space.
\newblock {\em Journal of Functional Analysis}, 21(3):305--315, 1976.

\bibitem{rockafellar1970convex}
R.~T. Rockafellar.
\newblock {\em Convex analysis}.
\newblock Princeton university press, 1970.

\bibitem{royden2010real}
H.~Royden and P.~Fitzpatrick.
\newblock {\em Real Analysis}.
\newblock Prentice Hall, 2010.

\bibitem{singer1997abstract}
I.~Singer.
\newblock {\em Abstract convex analysis}, volume~25.
\newblock John Wiley \& Sons, 1997.

\end{thebibliography}

\end{document}